\newtheorem{tm}{Theorem}[section]
\newtheorem{lm}[tm]{Lemma}
\newtheorem{re}[tm]{Remark}
\newtheorem{df}[tm]{Definition}
\newtheorem{pr}[tm]{Proposition}
\newtheorem{ex}[tm]{Example}
\newcommand{\subscripts}[3]{%
  \@mathmeasure\z@\displaystyle{#2}%
  \global\setbox\@ne\vbox to\ht\z@{}\dp\@ne\dp\z@
  \setbox\tw@\box\@ne
  \@mathmeasure4\displaystyle{\copy\tw@_{#1}}%
  \@mathmeasure6\displaystyle{{#2}_{#3}}%
  \dimen@-\wd6 \advance\dimen@\wd4 \advance\dimen@\wd\z@
  \hbox to\dimen@{}\mathop{\kern-\dimen@\box4\box6}%
}
\newcommand{\nn}{\nonumber}
\newcommand{\e}{\mathrm{e}}
\newcommand{\dis}{\displaystyle}
\begin{document}
%%%%%%%%%%%%%%%%%%%%%%%%%%%%%%%%%%%%%%%%%%%%%%%%%%%%%%
%\setlength{\baselineskip}{15.5pt}
%%%%%%%%%%%%%%%%%%%%%%%%%%%%%%%%%%%%%%%%%%%%%%%%%%%%%%
%%%%%%%%%%%%%%%%%%%%%%%%%%%%%%%%%%%%%%%%%%%%%%%%%%%%%%
\title[Monotonic normalized heat diffusion 
for Regular Bipartite Graphs]
{Monotonic Normalized Heat Diffusion for Regular Bipartite Graphs with Four Eigenvalues}
%%%%%%%%%%%%%%%%%%%%%%%%%%%%%%%%%%%%%%%%%%%%%%%%%%%%%%
%%%%%%%%%%%%%%%%%%%%%%%%%%%%%%%%%%%%%%%%%%%%%%%%%%%%%%

\author[T. Kubo]{Tasuku Kubo}
\author[R. Namba]{Ryuya Namba}
\date{\today}
\address[T. Kubo]{
Graduate School of Science and Engineering,
Ritsumeikan University, 1-1-1, 
Noji-higashi, Kusatsu, 525-8577, Japan}
\email{{\tt{ra0074vh@ed.ritsumei.ac.jp}}}
\address[R. Namba]{Department of Mathematics,
Faculty of Education,
Shizuoka University, 836, 
Ohya, Suruga-ku, Shizuoka, 422-8529, Japan}
\email{{\tt{namba.ryuya@shizuoka.ac.jp}}}
\subjclass[2010]{Primary: 
05C50; %Graphs and linear algebra (matrices, eigenvalues, etc.)
Secondary: 
05B05, %Combinatorial aspects of block designs
60J27%Continuous-time Markov processes on discrete state spaces
}
\keywords{heat kernel; bipartite graph; 
incidence graph; symmetric design}

%%%%%%%%%%%%%%%%%%%%%%%%%%%%%%%%%%%%%%%%%%%%%%%%%%%%%%
%%%%%%%%%%%%%%%%%%%%%%%%%%%%%%%%%%%%%%%%%%%%%%%%%%%%%%

\maketitle

%%%%%%%%%%%%%%%%%%%%%%%%%%%%%%%%%%%%%%%%%%%%%%%%%%%%%%
%%%%%%%%%%%%%%%%%%%%%%%%%%%%%%%%%%%%%%%%%%%%%%%%%%%%%%
\begin{abstract}
Let $X=(V, E)$ be a finite regular graph and 
$H_t(u, v), \, u, v \in V$, the
heat kernel on $X$.
We prove that, if the graph $X$ is bipartite
and has four distinct Laplacian eigenvalues, 
the ratio 
$H_t(u, v)/H_t(u, u), \, u, v \in V,$
is monotonically non-decreasing 
as a function of $t$. 
The key to the proof is the fact that such a graph is 
an incidence graph of a symmetric 2-design. 
\end{abstract}
%%%%%%%%%%%%%%%%%%%%%%%%%%%%%%%%%%%%%%%%%%%%%%%%%%%%%%
%%%%%%%%%%%%%%%%%%%%%%%%%%%%%%%%%%%%%%%%%%%%%%%%%%%%%%

%\tableofcontents

%% Section 1
%%%%%%%%%%%%%%%%%%%%%%%%%%%%%%%%%%%%%%%%%%%%%%%%%%%%%%%%%%%%%%%%%%%%%%%%%
%%%%%%%%%%%%%%%%%%%%%%%%%%%%%%%%%%%%%%%%%%%%%%%%%%%%%%%%%%%%%%%%%%%%%%%%%
\section{{\bf Introduction}}
%%%%%%%%%%%%%%%%%%%%%%%%%%%%%%%%%%%%%%%%%%%%%%%%%%%%%%%%%%%%%%%%%%%%%%%%%
%%%%%%%%%%%%%%%%%%%%%%%%%%%%%%%%%%%%%%%%%%%%%%%%%%%%%%%%%%%%%%%%%%%%%%%%%

Let $X=(V, E)$ be a finite, simple and connected graph. 
We denote the
heat kernel on $X$ by 
$H_t(u, v), \, t \ge 0, \, 
u, v \in V$, which is regarded as 
the probability that an $X$-valued 
continuous-time random walk
starting at $u$
reach $v$ at time $t$. 
We put 
$$
r_t(u, v):=\frac{H_t(u, v)}{H_t(u, u)}, \qquad 
t \ge 0, \, u, v \in V,
$$
whose behavior is of interest in the present paper.
Our study is originally motivated by the following question:

\begin{quote}
{\it What kinds of graphs satisfy the 
property that the function 
$t \longmapsto r_t(u, v)$, 
$t \ge 0$,
is monotonically non-decreasing for distinct vertices $u, v$\,?}
\end{quote}

In what follows, we call the property 
the {\it monotonic normalized heat diffusion}
(MNHD, in abbreviation). 
We should emphasize that 
this question is attributed to Peres 
(see Regev--Shinkar \cite{RS}). 
He also conjectured that 
every vertex-transitive graph would satisfy 
the MNHD. 
However, Regev and Shinkar \cite{RS} gave 
a negative answer to this conjecture 
by constructing a finite vertex-transitive graph 
which does not satisfy 
the MNHD. 
Moreover, they also gave an example 
of a not vertex-transitive but regular graph
which does not satisfy  
the MNHD. 
On the other hand, Price \cite{Price} showed 
that all finite abelian Cayley graphs, 
a class of vertex-transitive graphs,
satisfy the 
MNHD. 

Afterwards, Nica showed in \cite{Nica} the following 
by making effective use of 
the spectral decomposition 
of the graph Laplacian matrix. 

\begin{pr}
\label{Prop:Nica}
Every finite graph with three distinct Laplacian eigenvalues satisfies
the MNHD.  
\end{pr}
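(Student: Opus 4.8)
The plan is to fix distinct vertices $u,v$, reduce the monotonicity of $t\mapsto r_t(u,v)$ to the nonnegativity of one explicit function, and then make that nonnegativity visible through a logarithmic change of variables. Write $n=|V|$, let $0=\lambda_0<\lambda_1<\lambda_2$ be the three distinct Laplacian eigenvalues of $X$, and let $P_j$ be the orthogonal projection onto the $\lambda_j$-eigenspace, so $H_t=\sum_{j=0}^{2}\e^{-\lambda_j t}P_j$. Since $X$ is connected, $P_0=\tfrac1n J$ with $J$ the all-ones matrix; combining this with $P_0+P_1+P_2=I$ and writing $p=\e^{-\lambda_1 t}$, $q=\e^{-\lambda_2 t}$, one has for $u\ne v$
\[
H_t(u,v)=\tfrac1n(1-q)+P_1(u,v)(p-q),\qquad H_t(u,u)=\tfrac1n(1-q)+q+P_1(u,u)(p-q).
\]
Since $H_t(u,u)>0$, the quotient rule gives $r_t'(u,v)=W(t)/H_t(u,u)^2$ with $W(t):=H_t'(u,v)H_t(u,u)-H_t(u,v)H_t'(u,u)$ (derivatives in $t$), so it suffices to prove $W(t)\ge0$ for all $t\ge0$.

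The first genuine step is to compute $W$. Substituting the two displayed formulas together with $p'=-\lambda_1 p$ and $q'=-\lambda_2 q$, a short computation in which the coefficients of $p^2$ and of $q^2$ cancel identically produces
\[
W(t)=\tfrac1n\lambda_1\delta\,p+\tfrac1n\lambda_2(1-\delta)\,q+(\lambda_2-\lambda_1)\bigl(P_1(u,v)+\tfrac1n\delta\bigr)pq,\qquad \delta:=P_1(u,u)-P_1(u,v).
\]
Next I would record two elementary facts about $\delta$. From $L=\lambda_1 P_1+\lambda_2 P_2$, $P_0+P_1+P_2=I$, $P_0=\tfrac1n J$ one solves $P_1=(\lambda_2-\lambda_1)^{-1}\bigl(\lambda_2(I-\tfrac1n J)-L\bigr)$, and reading off the $(u,u)$- and $(u,v)$-entries gives $\delta=(\lambda_2-\lambda_1)^{-1}(\lambda_2-d_u-A_{uv})$, where $d_u=\deg u$ and $A_{uv}\in\{0,1\}$ records whether $u\sim v$. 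Since the star $K_{1,\Delta}$ on a vertex of maximum degree $\Delta$ is, after adding isolated vertices, a spanning subgraph of $X$, monotonicity of the Laplacian under edge addition yields $\lambda_2=\lambda_{\max}(L)\ge\lambda_{\max}(L_{K_{1,\Delta}})=\Delta+1\ge d_u+A_{uv}$, so $\delta\ge0$. Also, from $H_0=I$ and $H_0'=-L$ we get $W(0)=-L_{uv}=A_{uv}\ge0$.

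The final step is the substitution $\eta:=\e^{\lambda_1 t}$, an increasing bijection of $[0,\infty)$ onto $[1,\infty)$ with $p=\eta^{-1}$ and $q=\eta^{-\rho}$, $\rho:=\lambda_2/\lambda_1>1$. Multiplying $W(t)$ by the positive factor $\eta^{1+\rho}=(pq)^{-1}$ converts it into the smooth function on $[1,\infty)$
\[
\Psi(\eta):=\tfrac1n\lambda_1\delta\,\eta^{\rho}+\tfrac1n\lambda_2(1-\delta)\,\eta+(\lambda_2-\lambda_1)\bigl(P_1(u,v)+\tfrac1n\delta\bigr),
\]
which satisfies $\Psi(1)=W(0)=A_{uv}\ge0$. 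Since $\lambda_1\rho=\lambda_2$,
\[
\Psi'(\eta)=\tfrac1n\lambda_2\bigl(\delta\,\eta^{\rho-1}+(1-\delta)\bigr)\ge\tfrac1n\lambda_2\bigl(\delta+(1-\delta)\bigr)=\tfrac1n\lambda_2>0\qquad(\eta\ge1),
\]
using $\delta\ge0$ and $\eta^{\rho-1}\ge1$. Hence $\Psi$ is increasing on $[1,\infty)$, so $\Psi(\eta)\ge\Psi(1)\ge0$ and therefore $W(t)=\eta^{-(1+\rho)}\Psi(\eta)\ge0$ for all $t\ge0$, which is the desired conclusion. I expect the main obstacle to be purely the bookkeeping in the computation of $W$ — cancelling the quadratic terms and pinning down the coefficient of $pq$ — together with spotting the right change of variables; once $W$ is rewritten as $\Psi$, the inequality is governed entirely by $\delta\ge0$, that is, by the spectral bound $\lambda_{\max}(L)\ge\Delta+1$, and the rest is routine.
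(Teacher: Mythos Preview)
Your argument is correct. The computation of $W(t)$ checks out (the $p^2$ and $q^2$ terms do cancel, and the remaining coefficients agree with the paper's general formula $h_{u,v}(t)=\sum_i\frac{\lambda_i}{n}\Delta_i\e^{-\lambda_i t}+\sum_{i<j}(\lambda_j-\lambda_i)\Delta_{ij}\e^{-(\lambda_i+\lambda_j)t}$ specialized to two nonzero eigenvalues), the bound $\lambda_{\max}(L)\ge\Delta+1$ is the standard one and yields $\delta\ge0$, and the substitution $\eta=\e^{\lambda_1 t}$ cleanly reduces the sign question to the monotonicity of a two-term power function whose derivative is $\tfrac{\lambda_2}{n}\bigl(\delta(\eta^{\rho-1}-1)+1\bigr)\ge\tfrac{\lambda_2}{n}>0$.

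As for comparison: the paper does not actually prove Proposition~\ref{Prop:Nica}; it is only quoted from Nica~\cite{Nica}. What the paper \emph{does} develop, for the four-eigenvalue bipartite case, is the same Wronskian-type decomposition into $\Delta_i$ and $\Delta_{ij}$ terms that you have used, together with Lemma~\ref{e-mono} on functions $a_1\e^{\alpha t}+a_2\e^{-\alpha t}$. That lemma exploits the arithmetic coincidence $\lambda_1+\lambda_2=\lambda_3$ forced by bipartiteness, so that after multiplying by $\e^{\lambda_3 t}$ the cross terms pair up symmetrically. Your power substitution $\eta=\e^{\lambda_1 t}$ plays the analogous role in the three-eigenvalue setting but requires no relation between $\lambda_1$ and $\lambda_2$; it buys generality (arbitrary, possibly non-regular, graphs with three Laplacian eigenvalues) at the price of needing the spectral inequality $\lambda_{\max}(L)\ge\Delta+1$ to control the single sign $\delta\ge0$. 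In short, your route is in the same spirit as the paper's machinery but replaces the symmetric-pair trick by a monotone change of variables, which is exactly what the absence of eigenvalue symmetry calls for.
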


By taking these related studies into account, we know that whether
the MNHD holds or not
is so sensitive to the 
geometric structures of underlying finite graphs.
Therefore, it seems to be difficult to 
completely determine the class of finite graphs possessing the MNHD. 
As a further problem at this stage, 
it is natural to 
ask whether the similar claim 
to Proposition \ref{Prop:Nica} holds or not when 
the graph Laplacian has 
four distinct eigenvalues, 
which is posed in Nica \cite{Nica} as an open problem. 

Finite  graphs with a few Laplacian eigenvalues 
are well-studied  in 
combinatorics and spectral graph theory. 
It is known that the number of 
distinct Laplacian eigenvalues 
tends to determine the shape of 
a graph in some sense. 
For instance, a finite graph 
with only one Laplacian eigenvalue does not have any edges. Moreover, that with two distinct Laplacian eigenvalues 
is always complete. 
A regular graph with three distinct Laplacian eigenvalues is known to be strongly regular and  there are lots of studies on them. See e.g., Brouwer--Haemers 
\cite[Sect. 9]{BHb} for more details with many references therein.  

Finite graphs with four distinct Laplacian eigenvalues 
have been investigated as well and various examples are known. 
We refer to van Dam \cite{VD}
for several properties and examples of such graphs. 
One of interesting properties for the graphs 
involves with the so-called walk-regularity. 
A finite graph is said to be {\it walk-regular} 
if the number of closed walks of any length 
from a vertex to itself does not 
depend on the choice of the vertex.
Typical examples of walk-regular graphs 
are distance-regular graphs and vertex-transitive graphs.
Moreover, every finite regular graph with 
at most four distinct Laplacian eigenvalues
is known to be walk-regular. 
In view of these circumstances, 
it is worthwhile investigating our problem 
for the case of 
four distinct Laplacian eigenvalues.

The following is a main result of the present paper. 

\begin{tm}
\label{Thm:four-eigenvalues}
Suppose that $X$ is a 
regular bipartite graph 
with four distinct Laplacian eigenvalues. 
Then $X$ satisfies
the MNHD.
\end{tm}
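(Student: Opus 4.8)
The plan is to combine the spectral decomposition of the Laplacian with the structural fact, recalled above, that such an $X$ is the incidence graph of a symmetric $2$-$(v,k,\lambda)$ design. Thus $X$ is connected, $k$-regular and bipartite on $n=2v$ vertices with two parts of size $v$, its adjacency eigenvalues are $k$ and $-k$ (both simple) together with $\pm\theta$ (each of multiplicity $v-1$) for some $0<\theta<k$, and comparing $\mathrm{tr}(A^2)=\sum_u\deg u=nk$ with the spectral sum gives $\theta^2=k(v-k)/(v-1)$; in particular $\lambda=k-\theta^2$ and, since $\theta^2>0$, we have $v\ge k+1\ge 3$. Writing $L=kI-A$, so the Laplacian eigenvalues are $\lambda_0=0<\lambda_1=k-\theta<\lambda_2=k+\theta<\lambda_3=2k$, the heat kernel is $H_t=\sum_{i=0}^{3}\e^{-t\lambda_i}E_i$, where $E_0=\tfrac1nJ$, $E_3=\tfrac1nC$ with $C_{uv}=+1$ or $-1$ according as $u,v$ lie in the same part or in different parts, and $E_1,E_2$ are determined by $E_1+E_2=I-E_0-E_3$ and $\theta(E_1-E_2)=A-kE_0+kE_3$. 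Hence $(E_i)_{uv}$, and therefore $H_t(u,v)$, depends only on $\delta_{uv}$, $A_{uv}$ and $C_{uv}$, so $H_t(u,v)$ takes only four values: $g_0(t):=H_t(u,u)$, and, for distinct $u,v$, the value $g_2(t)$ if $u,v$ lie in the same part, the value $g_1(t)$ if $u\sim v$, and the value $g_3(t)$ if $u,v$ lie in different parts but $u\not\sim v$. A direct computation gives
\[
g_0(t)=\tfrac1n(1+\e^{-2kt})+\alpha\,\e^{-kt}\cosh(\theta t),\qquad g_2(t)=\tfrac1n(1+\e^{-2kt})-\tfrac2n\,\e^{-kt}\cosh(\theta t),
\]
\[
g_1(t)=\tfrac1n(1-\e^{-2kt})+\beta\,\e^{-kt}\sinh(\theta t),\qquad g_3(t)=\tfrac1n(1-\e^{-2kt})-\gamma\,\e^{-kt}\sinh(\theta t),
\]
with $\alpha=\tfrac{n-2}{n}$, $\beta=\tfrac{n-2k}{n\theta}$, $\gamma=\tfrac{2k}{n\theta}$. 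Since $g_0(t)=H_t(u,u)>0$ and, for distinct $u,v$, $r_t(u,v)\in\{g_1/g_0,\,g_2/g_0,\,g_3/g_0\}$, it suffices to show that each of these three ratios is non-decreasing on $[0,\infty)$.

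For $i\in\{1,2,3\}$ set $h_i:=g_0-g_i$; then $g_i/g_0=1-h_i/g_0$ is non-decreasing if and only if $h_i'g_0-h_ig_0'\le 0$. Writing the hyperbolic functions as exponentials, $h_i=a_i\e^{-(k-\theta)t}+b_i\e^{-(k+\theta)t}+c_i\e^{-2kt}$ with $(a_1,b_1,c_1)=(\tfrac{\alpha-\beta}{2},\tfrac{\alpha+\beta}{2},\tfrac2n)$, $(a_2,b_2,c_2)=(\tfrac12,\tfrac12,0)$, $(a_3,b_3,c_3)=(\tfrac{\alpha+\gamma}{2},\tfrac{\alpha-\gamma}{2},\tfrac2n)$. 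Using $\theta^2=k(v-k)/(v-1)$ one checks the two inequalities $\alpha\ge\beta$ (which reduces to $k\ge 1$) and $\alpha\ge\gamma$ (which reduces to $(v-1)(v-k)\ge k$, true since $v\ge k+1$), so that $a_i,b_i,c_i\ge 0$ for all $i$. A direct expansion of $h_i'g_0-h_ig_0'$ — in which the $\e^{-2(k\pm\theta)t}$ and $\e^{-4kt}$ monomials cancel — then yields $h_i'g_0-h_ig_0'=\e^{-2kt}\psi_i(t)$, where $\psi_i$ is an exponential polynomial with frequencies $0,\pm(k-\theta),\pm(k+\theta)$; of it we need only two features. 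First, evaluating at $t=0$ and using $\tfrac2n+\alpha=1$ gives $\psi_i(0)=\theta(a_i-b_i)-kc_i$, which equals $-1$ for $i=1$ and $0$ for $i=2,3$; in all cases $\psi_i(0)\le 0$. Second, differentiating gives $\psi_i'(t)=-(k^2-\theta^2)\,B_i(t)$ with
\[
B_i(t):=\tfrac{a_i}{n}\e^{(k+\theta)t}+\tfrac{b_i}{n}\e^{(k-\theta)t}+\Big(\tfrac{a_i}{n}-\tfrac{c_i\alpha}{2}\Big)\e^{-(k-\theta)t}+\Big(\tfrac{b_i}{n}-\tfrac{c_i\alpha}{2}\Big)\e^{-(k+\theta)t}.
\]

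It remains to see that $B_i\ge 0$, for then $\psi_i'\le 0$, hence $\psi_i\le\psi_i(0)\le 0$, hence $h_i'g_0-h_ig_0'=\e^{-2kt}\psi_i\le 0$, which is what we want. For $i=2$ we have $c_2=0$, so $B_2(t)=\tfrac1n\big(\cosh((k+\theta)t)+\cosh((k-\theta)t)\big)>0$. For $i\in\{1,3\}$ the crucial identity $a_i+b_i=c_i(v-1)$ holds — this is precisely the arithmetic relation $\lambda(v-1)=k(k-1)$ of a symmetric design, rewritten in terms of these coefficients — and it forces $\tfrac{a_i}{n}-\tfrac{c_i\alpha}{2}=-\tfrac{b_i}{n}$ and $\tfrac{b_i}{n}-\tfrac{c_i\alpha}{2}=-\tfrac{a_i}{n}$, whence
\[
B_i(t)=\tfrac{2a_i}{n}\sinh((k+\theta)t)+\tfrac{2b_i}{n}\sinh((k-\theta)t)\ge 0
\]
since $a_i,b_i\ge 0$. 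As $k>\theta$, this completes the proof that $r_t(u,v)=g_i(t)/g_0(t)$ is non-decreasing, and hence that $X$ satisfies the MNHD.

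The step I expect to be the real obstacle is the collapse at the end: there is no a priori reason for $\psi_i'$ to be a non-negative combination of hyperbolic sines, and that it is rests entirely on the identity $a_i+b_i=c_i(v-1)$, i.e.\ on the design equation $\lambda(v-1)=k(k-1)$ — this is where the hypotheses (bipartite, four eigenvalues, equivalently an incidence graph of a symmetric design) are genuinely used rather than cosmetically. The remaining difficulties are bookkeeping: keeping track of signs and cancellations in the expansion of $h_i'g_0-h_ig_0'$, and verifying the inequalities $\alpha\ge\beta$ and $\alpha\ge\gamma$, both of which reduce, via $\theta^2=k(v-k)/(v-1)$, to elementary facts about the design parameters.
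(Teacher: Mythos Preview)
Your proof is correct and follows essentially the same route as the paper: both classify off-diagonal pairs into three types via the incidence-graph structure, write the heat-kernel entries through the spectral decomposition, form the Wronskian-type quantity (your $h_i'g_0-h_ig_0'$, their $-h_{u,v}$), multiply by $\e^{2kt}=\e^{\lambda_3 t}$, and verify that the resulting function is monotone non-increasing starting from a non-positive value at $t=0$. Your packaging via $\cosh/\sinh$ and a single function $\psi_i$ is a bit tidier than the paper's case-by-case tabulation of $\Delta_i,\Delta_{ij}$, but the underlying computation is the same.

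One small correction to your narrative (not to the mathematics): the identity $a_i+b_i=c_i(v-1)$ for $i\in\{1,3\}$ is \emph{not} the design equation $\lambda(v-1)=k(k-1)$ in disguise. Both sides simply equal $\alpha=(n-2)/n$ by definition, so the identity is trivial. The design-theoretic input enters earlier: the formula $\theta^2=k(v-k)/(v-1)$ (from the trace of $A^2$) is what you use to verify $\alpha\ge\beta$ and $\alpha\ge\gamma$, i.e.\ the non-negativity of $a_i,b_i$; and the very fact that $H_t(u,v)$ depends only on the four ``types'' is a consequence of the four-eigenvalue bipartite structure. So your closing paragraph misidentifies where the hypotheses bite, but the argument itself stands.
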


One may wonder if the bipartiteness is imposed 
in the main theorem.
This is due to a highly technical reason. 
As is seen in Section 3, 
in order to complete the proof, 
we need to estimate 
several complicated quantities 
written in terms of the square of 
the graph Laplacian. 
In general cases, it is hard for us to find out
off-diagonal components of 
the graph Laplacian explicitly, 
which makes the estimates difficult. 
However, with bipartiteness, 
such components are completely written 
in terms of some parameters 
associated to a certain characterization 
of such graphs as incidence graphs 
of symmetric 2-designs. 
Hence, we treat the problem under the 
bipartiteness assumption. 

The rest of the present paper is organized as follows:
In Section 2, we recall some known properties 
of finite graphs with four distinct 
Laplacian eigenvalues as well as 
spectral decompositions of the graph Laplacian matrices. 
The incidence graphs of symmetric 2-designs
are also explained. 
The notion together with the bipartiteness 
play a key role in the proof of 
Theorem \ref{Thm:four-eigenvalues}. 
We give a proof of Theorem \ref{Thm:four-eigenvalues} 
by noting a relation between the square of
the graph Laplacian and the bipartiteness in Section 3.
Finally, we give several examples 
of finite non-bipartite graphs 
with four Laplacian eigenvalues satisfying the MNHD
in Section 4.  
We discuss further possible directions of this study as well.

%%  Section 2
%%%%%%%%%%%%%%%%%%%%%%%%%%%%%%%%%%%%%%%%%%%%%%%%%%%%%%%%%%%%%%%%%%%%%%%
%%%%%%%%%%%%%%%%%%%%%%%%%%%%%%%%%%%%%%%%%%%%%%%%%%%%%%%%%%%%%%%%%%%%%%%
\section{{\bf Preliminaries}}
%%%%%%%%%%%%%%%%%%%%%%%%%%%%%%%%%%%%%%%%%%%%%%%%%%%%%%%%%%%%%%%%%%%%%%%
%%%%%%%%%%%%%%%%%%%%%%%%%%%%%%%%%%%%%%%%%%%%%%%%%%%%%%%%%%%%%%%%%%%%%%%

Throughout the present paper, 
a graph $X=(V, E)$ is always 
finite, simple and connected, 
where $V$ is the set of all vertices and 
$E$ is the set of all unoriented edges. 
For a set $A$, we denote by $|A|$ 
the number of elements in $A$.

\subsection{Graph Laplacians and their eigenvalues}

Let $d$ be a positive integer 
and $X=(V, E)$ a finite $d$-regular graph. 
Suppose that $|V|=n \ge 2$. 
Then the {\it graph Laplacian} $L$ is 
a symmetric $n \times n$-matrix 
$L=(L(u, v))_{u, v \in V}$ defined by 
$L=D-A$, where
$D$ is the degree matrix and
$A$ the adjacency matrix. 
Then we easily have
\begin{equation}
\label{L^2(u, v)}
L^2(u, v)=\begin{cases}
d^2+d & \text{if }u=v \\
-2dA(u, v)
+\dis\sum_{w \neq u, v}A(u, w)A(w, v) &
\text{if }u \neq v
\end{cases}.
\end{equation}
By virtue of the Perron--Frobenius theorem, 
the graph Laplacian $L$ has zero
as its simple and trivial eigenvalue and 
the constant function 1 is the eigenfunction 
corresponding to zero. 
We here emphasize that the spectra of $L$ and 
those of $A$ are compatible in the sense that 
$\lambda \in \sigma(L)$ 
implies  $d-\lambda \in \sigma(A)$ and 
$\mu \in \sigma(A)$ implies $d-\mu \in \sigma(L)$,
respectively. 
Here, $\sigma(L)$ is the set of distinct eigenvalues 
of $L$, often called the spectral set of $L$. 
Since $L$ is symmetric, 
one has the spectral decompositions of $L$ and 
the corresponding heat kernel $H_t$, $t \ge 0$,
that is, 
$$
L=\sum_{\lambda \in \sigma(L)}\lambda P_\lambda, 
\qquad 
H_t:=\e^{-tL}=\sum_{\lambda \in \sigma(L)}
\e^{-t\lambda}P_\lambda,
$$
respectively, 
where $P_\lambda, \, \lambda \in \sigma(L),$ is 
the projection matrix onto the eigenspace 
associated with $\lambda$. 
As mentioned in the previous section, 
the heat kernel $H_t(u, v)$ represents 
the probability that an $X$-valued 
continuous-time random walk
starting at $u$
reach $v$ at time $t$. 
Since $X$ is connected, 
we have $r_0(u, v)=0$ 
and $r_t(u, v) \to 1$ 
as $t \to \infty$
for $u, v \in V$ 
with $u \neq v$. 
Though there is a regular graph 
$X=(V, E)$ such that 
$r_t(u, v) >1$ for some $u, v \in  V$
and some $t>0$, 
we can verify that 
$r_t(u, v) \le 1$ for 
$t \ge 0$ and $u, v \in V$ if
$X$ is vertex-transitive. See Regev--Shinkar \cite[Sect.~1 and Appendix A]{RS}.

%%%%%%%%%%%%%%%%%%%%%%%%%%%%%%%%%%%%%%%%%%%%%%%%%%%%%%%%%%%%%%%%%%%%%%%
%%%%%%%%%%%%%%%%%%%%%%%%%%%%%%%%%%%%%%%%%%%%%%%%%%%%%%%%%%%%%%%%%%%%%%%
\subsection{Finite graphs with four distinct Laplacian eigenvalues}
%%%%%%%%%%%%%%%%%%%%%%%%%%%%%%%%%%%%%%%%%%%%%%%%%%%%%%%%%%%%%%%%%%%%%%%
%%%%%%%%%%%%%%%%%%%%%%%%%%%%%%%%%%%%%%%%%%%%%%%%%%%%%%%%%%%%%%%%%%%%%%%

%We review some properties of finite graphs 
%whose graph Laplacian has four distinct eigenvalues. 
%We start with the definition of walk-regular graphs. 
 
%\begin{df}[walk-regular graph]
%A graph $X=(V, E)$ is called walk-regular if 
%for every non-negative integer $i$
%and vertex $v \in V$, 
%the number of closed paths of length $i$
%starting at $v$ depends only on $i$.
%\end{df}

%We easily see that every walk-regular graph 
%is regular. Moreover, distance-regular graphs 
%and vertex-transitive graphs 
%(including Cayley graphs of finite groups) 
%are known to be walk-regular. 
%Though there is no useful characterization
%of finite graphs 
%with three distinct Laplacian eigenvalues, 
%those with four distinct Laplacian eigenvalues
%have a relation with walk-regularity. 
% 
%\begin{pr}[cf.\,{\cite[Corollary 15.1.2]{BHb}}]
%A finite regular graph with four distinct 
%Laplacian eigenvalues is walk-regular. 
%\end{pr}

Van Dam studied regular graphs 
with four distinct Laplacian eigenvalues in \cite{VD}. 
He classified such graphs into three classes 
in terms of the number of integral eigenvalues
and exhibited interesting examples of such graphs.
Moreover, van Dam and Spence found in \cite{DS} 
all feasible spectra of finite graphs with $|V| \le 30$ and
four Laplacian eigenvalues
by using both theoretical techniques 
and computer results. 
We also refer to Mohammadian--Tayfeh-Rezaie \cite{MT} and Huang--Huang \cite{HHa,HHb} for related results
on finite and particularly regular 
graphs with four Laplacian eigenvalues. 

The following is the classification of 
Laplacian eigenvalues originally given by van Dam. 

\begin{pr}[cf. {van Dam \cite{VD} and van Dam--Spence \cite{DS}}]
\label{Prop:vanDam}
If $X$ is a $d$-regular graph with $|V|=n$
and has four distinct Laplacian eigenvalues, then
only one of these three properties holds.

\vspace{1mm}
\noindent
{\rm (i)} $X$ has four integral Laplacian eigenvalues.

\vspace{1mm}
\noindent
{\rm (ii)} $X$ has two integral Laplacian eigenvalues and two Laplacian eigenvalues 
of the form 
$d-(a \pm \sqrt{b})/2, \, a \in \mathbb{Z}, \, b \in \mathbb{N}$,
with the same multiplicity.

\vspace{1mm}
\noindent
{\rm (iii)} 
$X$ has one integral Laplacian eigenvalue $0$ 
and the other three have the same multiplicity 
$m = (n-1)/3$ and $d = m$ or $d = 2m$. 

\end{pr}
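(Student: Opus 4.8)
The plan is to read off the trichotomy from the factorization type over $\mathbb{Q}$ of a single cubic attached to $L$, and then to extract the numerical constraints of case (iii) from trace identities. First I would record the algebraic input. Since $X$ is $d$-regular, $L=dI-A$ has integer entries, so its characteristic polynomial $\chi_L$ is monic and lies in $\mathbb{Z}[x]$; because $L$ is symmetric it is diagonalizable, so its minimal polynomial $m_L$ has only simple roots, namely the four distinct eigenvalues, whence $\deg m_L=4$. A monic rational polynomial dividing a monic integer polynomial is itself integral (Gauss), so $m_L\in\mathbb{Z}[x]$. As $0\in\sigma(L)$ and $0$ is a simple root of $m_L$, I factor
\[
m_L(x)=x\,q(x),\qquad q(x)=x^3+c_2x^2+c_1x+c_0\in\mathbb{Z}[x],
\]
where $q$ is monic with three \emph{distinct nonzero} roots $\lambda_1,\lambda_2,\lambda_3$, the remaining Laplacian eigenvalues. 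The entire argument now turns on how $q$ factors over $\mathbb{Q}$: it either splits completely, or is a linear factor times an irreducible quadratic, or is irreducible. These alternatives are mutually exclusive and exhaustive, and I claim they correspond precisely to (i), (ii), (iii).

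Next I would dispatch the first two alternatives. If $q$ splits over $\mathbb{Q}$, its rational roots are integers by the rational root theorem, so all four eigenvalues are integral: case (i). If $q=(\text{linear})\times(\text{irreducible quadratic})$, then together with $0$ there are two integral eigenvalues, while the quadratic $x^2+\beta x+\gamma\in\mathbb{Z}[x]$ contributes a conjugate pair $(p\pm\sqrt{D})/2$ with $p=-\beta\in\mathbb{Z}$ and $D=\beta^2-4\gamma\in\mathbb{N}$ a nonsquare; setting $a=2d-p$ and $b=D$ rewrites this pair as $d-(a\pm\sqrt{b})/2$, the required shape: case (ii). Their equal multiplicity follows because $\chi_L\in\mathbb{Z}[x]$ is fixed by $\mathrm{Gal}(\overline{\mathbb{Q}}/\mathbb{Q})$, so any automorphism interchanging two eigenvalues preserves the exponent to which each linear factor divides $\chi_L$. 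Counting the integral eigenvalues ($4$, $2$, or exactly $1$) shows the three cases cannot overlap, which also yields the ``only one of these properties holds'' assertion.

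Finally, case (iii), where $q$ is irreducible, is the heart of the matter. Here $\lambda_1,\lambda_2,\lambda_3$ are Galois conjugate, so by the same multiplicity argument they share a common multiplicity $m$; since $0$ is simple (connectedness), $n=1+3m$, giving $m=(n-1)/3$, and $0$ is the sole integral eigenvalue. The constraint $d\in\{m,2m\}$ is the step I expect to demand the most care, and I would obtain it from a trace identity. Using $\mathrm{tr}\,L=nd$ (every diagonal entry of $L$ equals $d$) together with $\lambda_1+\lambda_2+\lambda_3=-c_2\in\mathbb{Z}$, the relation $m(\lambda_1+\lambda_2+\lambda_3)=nd$ forces $m\mid nd$; since $n=3m+1$ this is equivalent to $m\mid d$. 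Writing $k=d/m\in\mathbb{Z}_{\ge 1}$, the simple-graph bound $d\le n-1=3m$ gives $k\le 3$, and $k=3$ would mean $d=n-1$, i.e.\ $X=K_n$, whose Laplacian has only the two eigenvalues $0$ and $n$, contradicting the four-eigenvalue hypothesis. Hence $k\in\{1,2\}$, that is $d=m$ or $d=2m$, which completes case (iii) and the proposal.
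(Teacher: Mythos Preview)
The paper does not supply its own proof of this proposition; it is quoted from van Dam and van Dam--Spence as a known classification. Your argument is correct and is essentially the standard one from those references: factor the minimal polynomial of $L$ as $x\cdot q(x)$ with $q\in\mathbb{Z}[x]$ monic cubic, and read off the trichotomy from how $q$ factors over $\mathbb{Q}$; in the irreducible case use Galois conjugacy for the common multiplicity $m=(n-1)/3$, and the trace identity $m(\lambda_1+\lambda_2+\lambda_3)=nd$ together with $\gcd(m,3m+1)=1$ and $d\le n-1$ to force $d/m\in\{1,2\}$.
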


%% Section 2.3
%%%%%%%%%%%%%%%%%%%%%%%%%%%%%%%%%%%%%%%%%%%%%%%%%%%%%%%%%%%%%%%%%%%%%
%%%%%%%%%%%%%%%%%%%%%%%%%%%%%%%%%%%%%%%%%%%%%%%%%%%%%%%%%%%%%%%%%%%%%
\subsection{{\bf Incidence graph of symmetric 2-design}}
%%%%%%%%%%%%%%%%%%%%%%%%%%%%%%%%%%%%%%%%%%%%%%%%%%%%%%%%%%%%%%%%%%%%%
%%%%%%%%%%%%%%%%%%%%%%%%%%%%%%%%%%%%%%%%%%%%%%%%%%%%%%%%%%%%%%%%%%%%%

%In combinatorics, 
%the notion of block designs is known as
%a graph structure given by a finite set 
%together with a family of subsets called blocks. 
%It is also applied in many branches of mathematics
%such as finite projective geometry and 
%algebraic geometry.

Let $v, b, d, r, \lambda \in \mathbb{N}$.  
Suppose that $M$ is a set 
with $|M|=v$ and
$\mathcal{B}$ is a family of $b$ 
subsets of $M$.
A subset in $\mathcal{B}$ is called a {\it block}. 

\begin{df}[balanced incomplete block design]
The pair $(M, \mathcal{B})$ is called a 
balanced incomplete block design {\rm (BIBD)}
if the following hold.

\vspace{1mm}
\noindent
{\rm (i)} Each subset in $\mathcal{B}$ 
consists of $d$ elements.

\vspace{1mm}
\noindent
{\rm (ii)} For every $x \in M$, there are $r$ blocks 
including $x$. 

\vspace{1mm}
\noindent
{\rm (iii)} For every distinct $x, y \in M$, 
there are $\lambda$ blocks 
including $x, y$. 

\end{df}

A BIBD is also called a 
$(v, b, d, r, \lambda)$-{\it design}
or a 2-{\it design}. 
We note that $bd=vr$ and $\lambda(v-1)=r(d-1)$ 
necessarily hold among the parameters. 
In particular, if $v=b$ (and $r=d$) holds, then the BIBD is called {\it symmetric}. 
If a BIBD is symmetric, then it is also 
called a $(v, d, \lambda)$-design.  
See e.g., Brouwer--Haemers \cite[Sect. 4.8]{BHb}
for more details. 

\begin{df}[incidence graph of a BIBD]
Let $(M, \mathcal{B})$ be a BIBD. 
An {\it incidence graph of a BIBD} is a 
finite graph such that the vertex set is 
$\mathcal{V}=M \sqcup \mathcal{B}$ and 
two distinct vertices $x$ and $y$ are adjacent 
if $x \in M$, $y \in \mathcal{B}$ and $x \in y$. 
\end{df}

By definition, if $x,y \in M$ 
or $x, y \in \mathcal{B}$,
then $x$ and $y$ are never adjacent in the incidence graph
of $(M, \mathcal{B})$. 
This immediately means that an incidence graph
of $(M, \mathcal{B})$ is bipartite with the bipartition 
$M \sqcup \mathcal{B}$. 
There is the following remarkable relation between 
regular bipartite graphs and 
incidence graphs of symmetric 2-designs.

\begin{pr}[cf.~{Brouwer--Haemers \cite[Proposition 15.1.3]{BHb}}]
\label{Prop:incidence}
A regular bipartite graph 
with four distinct Laplacian eigenvalues 
is an incidence graph of a symmetric {\rm 2}-design.
\end{pr}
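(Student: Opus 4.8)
The plan is to show that bipartiteness together with the four-eigenvalue hypothesis forces the biadjacency matrix of $X$ to satisfy a rigid quadratic identity, from which the design structure can simply be read off.

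First I would pin down the spectrum. Since $X$ is $d$-regular, connected and bipartite, the adjacency matrix $A$ has $d$ and $-d$ as \emph{simple} eigenvalues and its spectrum is symmetric about $0$; as $L=dI-A$ has exactly four distinct eigenvalues, so does $A$. Hence $\sigma(A)=\{d,\mu,-\mu,-d\}$ for some $\mu$ with $0<\mu<d$: the value $\mu=0$ is excluded because it would leave only three distinct eigenvalues, and $\mu=\pm d$ is excluded since $\pm d$ are simple. Consequently $A^2$ has exactly two distinct eigenvalues, $d^2$ (of multiplicity $2$) and $\mu^2$.

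Next, write the bipartition as $V=V_1\sqcup V_2$ and
$$
A=\begin{pmatrix} 0 & N \\ N^{T} & 0 \end{pmatrix},
\qquad
A^2=\begin{pmatrix} NN^{T} & 0 \\ 0 & N^{T}N \end{pmatrix},
$$
where $N\in\{0,1\}^{V_1\times V_2}$ is the biadjacency matrix. Counting edges gives $d|V_1|=d|V_2|$, so $|V_1|=|V_2|=:n$ and $N$ is square. By $d$-regularity $N\mathbf{1}=N^{T}\mathbf{1}=d\mathbf{1}$, so $NN^{T}\mathbf{1}=d^2\mathbf{1}$. Since $\mu\neq 0$, $NN^{T}$ is nonsingular, hence $N$ is invertible and $N^{T}N=N^{-1}(NN^{T})N$ has the same spectrum as $NN^{T}$; comparing with the multiplicity $2$ of $d^2$ in $A^2$, the eigenvalue $d^2$ is simple in $NN^{T}$ (and $\mu^2$ has multiplicity $n-1$). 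As $\mathbf{1}$ spans the $d^2$-eigenspace, this yields
$$
NN^{T}=\mu^2 I+\frac{d^2-\mu^2}{n}\,J,
$$
with $J$ the all-ones matrix. Finally I would translate this into the design: take $M:=V_1$ as the point set and $\mathcal{B}:=\{\,\Gamma(y)\cap V_1 : y\in V_2\,\}$ as the blocks, where $\Gamma(y)$ is the neighborhood of $y$; then $X$ is by construction the incidence graph of $(M,\mathcal{B})$. The diagonal of $NN^{T}$ says every block has size $d$; $d$-regularity of the $V_1$-vertices says every point lies in $r=d$ blocks; and the constant off-diagonal entry says any two distinct points lie together in exactly $\lambda:=(d^2-\mu^2)/n$ blocks, which is a positive integer since it is a number of common neighbors and $0<\mu<d$ (one also checks $1<d<n$, so the design is incomplete). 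Thus $(M,\mathcal{B})$ is a $2$-design with $v=b=n$ and $r=d$, i.e.\ symmetric, and $X$ is its incidence graph.

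I expect the only genuinely delicate point to be the simplicity of $d^2$ as an eigenvalue of $NN^{T}$, i.e.\ checking that the $\pm\mu$-eigenspaces of $A$ are split evenly between the two sides of the bipartition. The route above handles this via invertibility of $N$ and the similarity $N^{T}N\sim NN^{T}$; alternatively one argues directly that a $\mathbf{1}$-orthogonal $d^2$-eigenvector $x$ of $NN^{T}$ would make $(x,\tfrac1d N^{T}x)^{T}$ a second independent $(+d)$-eigenvector of $A$, contradicting connectedness. Everything else is bookkeeping with the block form of $A$ and the spectral decomposition.
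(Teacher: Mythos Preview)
The paper does not supply its own proof of this proposition; it is quoted from Brouwer--Haemers \cite{BHb} and used as a black box. Your argument is correct and is essentially the standard one: pin down the adjacency spectrum via bipartiteness and connectedness, pass to the square biadjacency matrix $N$, and derive the identity $NN^{T}=\mu^{2}I+\dfrac{d^{2}-\mu^{2}}{n}\,J$, from which the $2$-design axioms are read off. Your handling of the only delicate point---the simplicity of $d^{2}$ as an eigenvalue of $NN^{T}$---via the invertibility of $N$ and the similarity $N^{T}N=N^{-1}(NN^{T})N$ is clean; the alternative you sketch (lifting a hypothetical second $d^{2}$-eigenvector of $NN^{T}$ to a second $d$-eigenvector of $A$) also works.

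One cosmetic slip: the diagonal entries of $NN^{T}$ record the replication number $r$ (the number of blocks through each point), not the block size $k$; the block size is the column sum of $N$. Since both equal $d$ by regularity, the conclusion is unaffected. You might also note explicitly that the invertibility of $N$ forces the neighbourhoods $\Gamma(y)\cap V_1$ to be pairwise distinct, so that $\mathcal{B}$ really has $|V_2|=n$ blocks and the identification of $X$ with the incidence graph is a genuine isomorphism.
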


Moreover, the Laplacian eigenvalues 
of a given incidence graph 
of a symmetric 2-design are explicitly written
in terms of the parameter $(v, d, \lambda)$. 

\begin{pr}[cf.~{van Dam \cite[Sect. 4.1.2]{VD}}]
\label{Prop:incidence2}
The Laplacian eigenvalues of 
an incidence graph of a symmetric 
$(v, d, \lambda)$-design is given by 
\begin{equation}
\label{lambdas}    
\lambda_0=0, \qquad 
\lambda_1=d-\sqrt{d-\lambda}, \qquad 
\lambda_2=d+\sqrt{d-\lambda}, \qquad 
\lambda_3=2d. 
\end{equation}
\end{pr}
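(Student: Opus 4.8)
The plan is to read off the full spectrum of the incidence graph from a block decomposition of its adjacency matrix. Write the vertex set as $\mathcal{V}=M\sqcup\mathcal{B}$ with $|M|=|\mathcal{B}|=v$, and let $N=(N_{xy})_{x\in M,\,y\in\mathcal{B}}$ be the point--block incidence matrix of the symmetric $(v,d,\lambda)$-design, so that $N_{xy}=1$ precisely when $x\in y$. Ordering the vertices so that the points precede the blocks, the adjacency matrix of the incidence graph is
\[
A=\begin{pmatrix} O & N \\ N^{\top} & O\end{pmatrix}.
\]
Since the design is symmetric, each point lies in $r=d$ blocks and each block contains $d$ points, so the incidence graph is $d$-regular and its Laplacian is $L=dI_{2v}-A$. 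By the spectral compatibility between $L$ and $A$ recalled above ($\mu\in\sigma(A)\Rightarrow d-\mu\in\sigma(L)$), it then suffices to compute $\sigma(A)$.

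First I would compute $NN^{\top}$. By condition (ii) in the definition of a BIBD, its diagonal entries all equal $r=d$, and by condition (iii) its off-diagonal entries all equal $\lambda$; hence $NN^{\top}=(d-\lambda)I_v+\lambda J_v$, where $J_v$ is the $v\times v$ all-ones matrix. Since $J_v$ has eigenvalue $v$ on the constant vector and eigenvalue $0$ on its orthogonal complement, $NN^{\top}$ has eigenvalue $d-\lambda+\lambda v$ with multiplicity $1$ and eigenvalue $d-\lambda$ with multiplicity $v-1$. Invoking the parameter identity $\lambda(v-1)=r(d-1)$, which for a symmetric design reads $\lambda(v-1)=d(d-1)$, we get $d-\lambda+\lambda v=d+\lambda(v-1)=d^{2}$. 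Thus the eigenvalues of $NN^{\top}$ are $d^{2}$ with multiplicity $1$ and $d-\lambda$ with multiplicity $v-1$; in particular $NN^{\top}$ is positive definite, $N$ is nonsingular, and its singular values are $d$ (once) and $\sqrt{d-\lambda}$ ($v-1$ times).

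Next I would pass from $N$ to $A$. It is standard that the eigenvalues of a block matrix $\begin{pmatrix} O & N \\ N^{\top} & O\end{pmatrix}$ with $N$ a nonsingular square matrix are exactly $\pm$ the singular values of $N$, with matching multiplicities: if $p$ is a unit eigenvector of $NN^{\top}$ with $NN^{\top}p=\sigma^{2}p$ and $\sigma>0$, then $\bigl(p,\pm\sigma^{-1}N^{\top}p\bigr)^{\top}$ is an eigenvector of $A$ for the eigenvalue $\pm\sigma$, and running over a basis of eigenvectors of $NN^{\top}$ accounts for all $2v$ eigenvalues. Hence $\sigma(A)=\{\,d,\,-d,\,\sqrt{d-\lambda},\,-\sqrt{d-\lambda}\,\}$, and applying $\mu\mapsto d-\mu$ turns these into the Laplacian eigenvalues $0=d-d$, $\lambda_1=d-\sqrt{d-\lambda}$, $\lambda_2=d+\sqrt{d-\lambda}$ and $\lambda_3=2d=d-(-d)$, which is exactly \eqref{lambdas}. (For a nontrivial symmetric design one has $1\le\lambda<d$, so these four numbers are pairwise distinct, in agreement with Proposition~\ref{Prop:incidence}.)

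The whole argument is essentially a mechanical linear-algebra computation, so I do not anticipate a genuine obstacle. The only two points that deserve a moment's care are the combinatorial parameter identity $\lambda(v-1)=d(d-1)$ and the identification of $\sigma(A)$ with $\pm$ the singular values of $N$; both are classical. The real content of the proposition is simply the tidy closed form $NN^{\top}=(d-\lambda)I_v+\lambda J_v$ that the symmetric-design structure imposes.
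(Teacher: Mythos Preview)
Your argument is correct and entirely standard: the block form $A=\begin{pmatrix}O&N\\ N^{\top}&O\end{pmatrix}$, the identity $NN^{\top}=(d-\lambda)I_v+\lambda J_v$, and the singular-value description of $\sigma(A)$ combine exactly as you say to give the four Laplacian eigenvalues in \eqref{lambdas}. The paper itself does not supply a proof of this proposition; it is quoted from van Dam \cite[Sect.~4.1.2]{VD} as a known fact, so there is nothing to compare against beyond noting that your derivation is precisely the classical one.
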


%%  Section 3
%%%%%%%%%%%%%%%%%%%%%%%%%%%%%%%%%%%%%%%%%%%%%%%%%%%%%%%%%%%%%%%%%%%%%
%%%%%%%%%%%%%%%%%%%%%%%%%%%%%%%%%%%%%%%%%%%%%%%%%%%%%%%%%%%%%%%%%%%%%
\section{{\bf Proof of Theorem \ref{Thm:four-eigenvalues}}}
%%%%%%%%%%%%%%%%%%%%%%%%%%%%%%%%%%%%%%%%%%%%%%%%%%%%%%%%%%%%%%%%%%%%%
%%%%%%%%%%%%%%%%%%%%%%%%%%%%%%%%%%%%%%%%%%%%%%%%%%%%%%%%%%%%%%%%%%%%%

Let $X=(V, E)$ be a finite, $d$-regular 
and bipartite graph with $|V|=n \in 2\mathbb{N}$
and bipartition $V=V_1 \sqcup V_2$. 
Suppose that the graph Laplacian $L$ of $X$ has precisely 
four distinct eigenvalues.
Since $X$ is $d$-regular and bipartite, 
Propositions \ref{Prop:incidence} and 
\ref{Prop:incidence2} imply that 
$X$ is an incidence graph 
of $(n/2, d, \lambda)$-design, 
where $\lambda=2d(d-1)/(n-2) \in \mathbb{N}$, 
and its four Laplacian eigenvalues
$\lambda_0, \lambda_1, \lambda_2, \lambda_3$ are written as \eqref{lambdas}.

The aim of this section is 
to give a proof of 
Theorem \ref{Thm:four-eigenvalues} 
by making use of the spectral decomposition 
of $L$ and the corresponding heat kernel
$H_t=\e^{-tL}, \, t \ge 0$, 
as well as Propositions \ref{Prop:incidence}
and \ref{Prop:incidence2}. 

In our setting, the heat kernel is given by
$$
H_t=\e^{-tL}=P_0+\e^{-t\lambda_1}P_{\lambda_1}+
\e^{-t\lambda_2}P_{\lambda_2}+\e^{-t\lambda_3}P_{\lambda_3},
\qquad t \ge 0.
$$
Solving  $I=P_0+P_{\lambda_1}+P_{\lambda_2}+P_{\lambda_3}$
and $L^k=\lambda_1^k P_{\lambda_1}+\lambda_2^k P_{\lambda_2}+\lambda_3^k P_{\lambda_3}$, $k=1, 2$, we have 
%$$
%\begin{cases}
%I=P_0+P_{\lambda_1}+P_{\lambda_2}+P_{\lambda_3} \\
%L=\lambda_1 P_{\lambda_1}+\lambda_2 P_{\lambda_2}+\lambda_3 P_{\lambda_3} \\
%L^2=\lambda_1^2 P_{\lambda_1}+\lambda_2^2 P_{\lambda_2}+\lambda_3^2 P_{\lambda_3}
%\end{cases},
%$$
\begin{equation}
\label{Ps}
\begin{cases}
P_{\lambda_1} 
= C_1
\big\{L^2-(\lambda_2+\lambda_3)L
+\lambda_2\lambda_3(I-P_0)\big\} \\
P_{\lambda_2} 
= C_2
\big\{ L^2-(\lambda_1+\lambda_3)L
+\lambda_1\lambda_3(I-P_0)\big\} \\
P_{\lambda_3} 
= C_3
\big\{ L^2-(\lambda_1+\lambda_2)L
+\lambda_1\lambda_2(I-P_0)\big\} 
\end{cases},
\end{equation}
where $I$ is the identity matrix and 
$C_1, C_2, C_3$ are constants given by 
$$
\begin{aligned}
C_1=
%\frac{1}{(\lambda_3-\lambda_1)(\lambda_2-\lambda_1)}
\frac{1}{2\lambda_2\sqrt{d-\lambda}},  
\qquad 
C_2=
%\frac{1}{(\lambda_3-\lambda_2)(\lambda_1-\lambda_2)}
-\frac{1}{2\lambda_1\sqrt{d-\lambda}}, 
\qquad
C_3=
%\frac{1}{(\lambda_1-\lambda_3)(\lambda_2-\lambda_3)}
\frac{1}{\lambda_1\lambda_2} 
\end{aligned}
$$
respectively. 
Note that $C_1, C_3>0$ and $C_2<0$. 

For $u, v \in V$ with $u \neq v$, 
we define a function 
$h_{u, v} : [0, \infty) \to \mathbb{R}$ by
$$
h_{u, v}(t):=
H_t'(u, v)H_t(u, u) - H_t(u, v)H_t'(u, u), 
\qquad t \ge 0,
$$
where $H_t'$ stands for the derivative of $H_t$
with respect to $t$. 
A direct computation gives us
\begin{align}
h_{u, v}(t) &= 
%\sum_{i=1}^3 \frac{\lambda_i}{n} \e^{-t\lambda_i}\big\{ P_{\lambda_i}(u, u) - 
%P_{\lambda_i}(u, v)\big\}\nn\\
%&\hspace{1cm}+(\lambda_2-\lambda_1)
%\e^{-t(\lambda_1+\lambda_2)}
%\big\{P_{\lambda_1}(u, v)P_{\lambda_2}(u, u) - 
%P_{\lambda_2}(u, v)P_{\lambda_1}(u, u)\big\}
%\nn\\
%&\hspace{1cm}+(\lambda_3-\lambda_1)
%\e^{-t(\lambda_1+\lambda_3)}
%\big\{P_{\lambda_1}(u, v)P_{\lambda_3}(u, u) - 
%P_{\lambda_3}(u, v)P_{\lambda_1}(u, u)\big\}\nn\\
%&\hspace{1cm}+(\lambda_3-\lambda_2)
%\e^{-t(\lambda_2+\lambda_3)}
%\big\{P_{\lambda_2}(u, v)P_{\lambda_3}(u, u) - 
%P_{\lambda_3}(u, v)P_{\lambda_2}(u, u)\big\}\nn\\
 \sum_{i=1}^3\frac{\lambda_i}{n}\Delta_i(u, v)\e^{-t\lambda_i}
+(\lambda_2-\lambda_1)
\Delta_{12}(u, v)\e^{-t(\lambda_1+\lambda_2)}\nn\\
&\hspace{1cm}
+(\lambda_3-\lambda_1)
\Delta_{13}(u, v)\e^{-t(\lambda_1+\lambda_3)} \nn\\
&\hspace{1cm}
+(\lambda_3-\lambda_2)
\Delta_{23}(u, v)\e^{-t(\lambda_2+\lambda_3)},
\label{h(t)}
\end{align}
where 
$$
\Delta_i(u, v) = 
P_{\lambda_i}(u, u) - 
P_{\lambda_i}(u, v), \qquad i=1, 2, 3,
$$
and 
$$
\Delta_{ij}(u, v) =
P_{\lambda_i}(u, v)P_{\lambda_j}(u, u) - 
P_{\lambda_j}(u, v)P_{\lambda_i}(u, u)
$$
for $(i, j)=(1, 2), (1, 3),$ and $(2, 3)$.

Suppose that $L(u, v)=-1$ or equivalently $A(u, v)=1$. 
Then, we easily see that $A(u, w)A(w, v)=0$ 
for all $w \neq u, v$ since $X$ is bipartite. 
Therefore, it follows from \eqref{L^2(u, v)} 
that $L^2(u, v)=-2d$. 
Next suppose that $L(u, v)=0$ 
or equivalently $A(u, v)=0$.
There are two possibilities: 
(1) $u, v \in V_1 \text{ or } u, v \in V_2$, and (2)
$u \in V_1,  v \in V_2 \text{ or } 
u \in V_2,  v \in V_1$. 
In the case (1), it is obvious that 
$$
\sum_{w \neq u, v}A(u, w)A(w, v)=\lambda
$$
by the definition of the block design.
Hence, $L^2(u, v)=\lambda$ follows. 
On the other hand, in the case (2), 
we also see that $A(u, w)A(w, v)=0$ for all $w \neq u, v$. 
Therefore, it holds that
$L^2(u, v)=0$. 
Then the set $V \times V$ 
is decomposed as 
$W_0 \sqcup W_1 \sqcup W_2 \sqcup W_3$, 
where $W_0=\{(u, u) \in V \times V\}$ and
$$
\begin {aligned}
W_1&=\{(u, v) \in V \times V \, 
| \, L(u, v)=-1 \text{ and }L^2(u, v)=-2d\}, \\
W_2&=\{(u, v) \in V \times V \, 
| \, L(u, v)=0 \text{ and }L^2(u, v)=\lambda\}, \\
W_3&=\{(u, v) \in V \times V \, 
| \, L(u, v)=0 \text{ and }L^2(u, v)=0\}. \\
\end{aligned}
$$

In order to prove Theorem \ref{Thm:four-eigenvalues}, 
we need several lemmas. 

\begin{lm}
\label{derivative}
For any $u, v \in V$ 
with $u \neq v$, 
the ratio $r_t(u, v)$
has non-negative derivative at 
$t = 0$. Namely, it holds that 
$h_{u, v}(0) \ge 0$. 
\end{lm}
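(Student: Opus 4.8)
The plan is to reduce the statement to a one-line computation at the initial time. Since $r_t(u,v)=H_t(u,v)/H_t(u,u)$ and $H_t(u,u)>0$ for every $t\ge 0$ — indeed $H_t(u,u)=\sum_{\lambda\in\sigma(L)}\e^{-t\lambda}P_\lambda(u,u)\ge P_0(u,u)=1/n>0$, using that $P_0$ is the orthogonal projection onto the constant functions (so all its diagonal entries equal $1/n$) and that the diagonal entries of the remaining projection matrices are non-negative — the quotient rule gives
$$
\frac{\dd}{\dd t}\,r_t(u,v)=\frac{H_t'(u,v)H_t(u,u)-H_t(u,v)H_t'(u,u)}{H_t(u,u)^2}=\frac{h_{u,v}(t)}{H_t(u,u)^2}.
$$
Hence the derivative of $r_t(u,v)$ at $t=0$ is non-negative precisely when $h_{u,v}(0)\ge 0$, which is the assertion to be proved.

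To evaluate $h_{u,v}(0)$ I would work directly from its definition rather than from the spectral expansion \eqref{h(t)}. Since $H_0=\e^{-0\cdot L}=I$, we have $H_0(u,v)=0$ for $u\neq v$ and $H_0(u,u)=1$, while differentiating $H_t=\e^{-tL}$ gives $H_t'=-L\e^{-tL}$, so $H_0'(u,v)=-L(u,v)$. Substituting into $h_{u,v}(t)=H_t'(u,v)H_t(u,u)-H_t(u,v)H_t'(u,u)$ at $t=0$, the second summand drops out and
$$
h_{u,v}(0)=-L(u,v).
$$
Because $u\neq v$ and $L=D-A$ with $D$ diagonal, one has $L(u,v)=-A(u,v)\in\{-1,0\}$, so $h_{u,v}(0)=A(u,v)\ge 0$, as claimed.

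There is essentially no obstacle in this lemma: it merely records that $r_t(u,v)$ starts out non-decreasing, and its purpose is to supply the endpoint $t=0$ for the monotonicity argument, the case $t>0$ being handled by analysing the exponential sum \eqref{h(t)} in the subsequent lemmas. As a consistency check one may instead set $t=0$ in \eqref{h(t)} and collapse it using $\sum_{i=1}^{3}P_{\lambda_i}=I-P_0$ and $\sum_{i=1}^{3}\lambda_i^{k}P_{\lambda_i}=L^{k}$, recovering the same value $-L(u,v)$; but this detour is unnecessary.
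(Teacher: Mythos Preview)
Your proof is correct and follows essentially the same route as the paper's: both evaluate $h_{u,v}(0)$ directly from $H_0=I$ and $H_0'=-L$ to obtain $h_{u,v}(0)=-L(u,v)\ge 0$. You add a little more justification (positivity of $H_t(u,u)$ and the identification $-L(u,v)=A(u,v)$), but the argument is the same.
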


\begin{proof}
Although the proof has been done in Nica \cite[Lemma 3.2]{Nica},
we here give it for the sake of completeness.
It is sufficient to show that 
$$
H_t'(u, v)H_t(u, u) \ge H_t(u, v)H_t'(u, u)
$$
at $t=0$. Since $H_0=I$, we have 
$H_0(u, u)=1$ and $H_0(u, v)=0$. 
Therefore, we are left with checking that 
$H_0'(u, v) \ge 0$. 
It holds that $H_t'=-LH_t$ and, in particular, 
$H_0'=-LH_0=-L$. Hence, we obtain 
$H_0'(u, v)=-L(u, v) \ge 0$, as desired.  
\end{proof}

\begin{lm}
\label{e-mono}
Let $\alpha>0$ and $a_1, a_2 \in \mathbb{R}$. Then,
the function $F(t):=a_1\e^{\alpha t}+a_2\e^{-\alpha t}$ is 
monotonically non-decreasing in $[0, \infty)$ if 
either of the following two conditions hold. 

\vspace{1mm}
\noindent
{\rm (1)} $a_1 \ge 0$ and $a_2 \le 0$. 

\vspace{1mm}
\noindent
{\rm (2)} $0 \le a_2 \le a_1$. 
\end{lm}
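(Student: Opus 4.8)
The plan is simply to differentiate $F$ and examine the sign of $F'$ on $[0,\infty)$ in each of the two cases. First I would compute
$$
F'(t) = \alpha\big(a_1\e^{\alpha t} - a_2\e^{-\alpha t}\big),
$$
so that, since $\alpha>0$, the assertion reduces to showing $a_1\e^{\alpha t} - a_2\e^{-\alpha t} \ge 0$ for every $t\ge 0$. Multiplying through by the positive number $\e^{\alpha t}$, this is in turn equivalent to the inequality $a_1\e^{2\alpha t} \ge a_2$ for all $t \ge 0$.

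In case (1), where $a_1\ge 0$ and $a_2\le 0$, the two summands $a_1\e^{\alpha t}$ and $-a_2\e^{-\alpha t}$ are both non-negative, so $F'(t)\ge 0$ with no further work. In case (2), where $0\le a_2\le a_1$, I would invoke $t\ge 0$ to get $\e^{2\alpha t}\ge 1$ (this is exactly where the hypotheses $\alpha>0$ and $t\ge 0$ enter), whence $a_1\e^{2\alpha t}\ge a_1\ge a_2\ge 0$, and again $F'(t)\ge 0$. In both cases $F'\ge 0$ throughout $[0,\infty)$, which is the claim.

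There is no genuine obstacle here; this is an elementary one‑variable calculus computation. The only point worth flagging is that in case (2) both hypotheses are really needed: dropping $a_2\le a_1$ would give $F'(0)=\alpha(a_1-a_2)<0$, and dropping $t\ge 0$ (i.e.\ working on all of $\mathbb{R}$) would destroy monotonicity as soon as $a_2>0$. The lemma is tailored to the exponential pairs $a_1\e^{-t\lambda_1}+a_2\e^{-t\lambda_2}$ that arise after regrouping the terms of \eqref{h(t)}, so the formulation in terms of $\e^{\pm\alpha t}$ is just a normalized version of what will actually be applied.
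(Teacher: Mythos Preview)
Your argument is correct and is precisely the ``basic calculus'' the paper alludes to in its one-line proof; you have simply written out the details of differentiating $F$ and checking the sign of $F'$ in each case. There is nothing to add.
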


\begin{proof}
Due to basic calculus. 
\end{proof}

\begin{lm}
\label{Lem:P's}
It holds that 
\begin{equation}
\label{P123}
\Delta_1(u, v) \ge 0, \quad 
\Delta_2(u, v) \ge 0, \quad 
\Delta_3(u, v) \ge 0
\end{equation}
for any $u, v \in V$ with $u \neq v$. 
\end{lm}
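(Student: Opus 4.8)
The plan is to deduce the three inequalities from the explicit formulas \eqref{Ps} via a short finite case check keyed to the decomposition $V\times V=W_0\sqcup W_1\sqcup W_2\sqcup W_3$ set up above. First I would note that, since $L$ annihilates the constant vector and the eigenvalue $0$ is simple, $P_0=n^{-1}J$ with $J$ the all-ones matrix, so that for every $u\neq v$ one has $(I-P_0)(u,u)-(I-P_0)(u,v)=(1-n^{-1})-(-n^{-1})=1$. Substituting this into \eqref{Ps} and writing $\{j,k\}=\{1,2,3\}\setminus\{i\}$, each difference factors as
$$
\Delta_i(u,v)=C_i\,Q_i(u,v),\qquad
Q_i(u,v):=\big(L^2(u,u)-L^2(u,v)\big)-(\lambda_j+\lambda_k)\big(L(u,u)-L(u,v)\big)+\lambda_j\lambda_k .
$$
Since $C_1,C_3>0$ and $C_2<0$ (as already recorded), it is enough to prove $Q_1\ge 0$, $Q_3\ge 0$ and $Q_2\le 0$ for all $u\neq v$.

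The key point is that $Q_i(u,v)$ depends on the pair only through $L(u,u)-L(u,v)=d-L(u,v)$ and $L^2(u,u)-L^2(u,v)=(d^2+d)-L^2(u,v)$, and the sets $W_1,W_2,W_3$ record exactly the off-diagonal values this pair of quantities can take, namely $(d+1,\,d^2+3d)$ on $W_1$, $(d,\,d^2+d-\lambda)$ on $W_2$, and $(d,\,d^2+d)$ on $W_3$. So the lemma reduces to the $3\times3$ table obtained by substituting these three pairs into $Q_1,Q_2,Q_3$, using \eqref{lambdas} and the identity $\lambda=d-(d-\lambda)$ where convenient. Each entry then collapses to a transparently signed expression; for instance $Q_3=\lambda$ on $W_1\cup W_3$ and $Q_3=0$ on $W_2$, while $Q_1$ is strictly positive on each $W_m$. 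One reads off $\Delta_i(u,v)\ge 0$ throughout, with strict inequality in every case except $\Delta_3\equiv0$ on $W_2$ (and one degenerate case of $\Delta_2$ on $W_3$).

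The one substitution that is not immediately signed — and hence the mild obstacle — is $Q_2$ on $W_3$, where the computation yields $Q_2=d\big(1-\sqrt{d-\lambda}\,\big)$; this is $\le0$ precisely because the four Laplacian eigenvalues in \eqref{lambdas} being real and distinct forces $d-\lambda$ to be a positive integer, so $\sqrt{d-\lambda}\ge1$. I should also mention a slicker alternative that bypasses the case analysis entirely: $X$ is walk-regular, so each orthogonal projection $P_{\lambda_i}$ — being a polynomial in $L$, hence in $A$ — has constant nonnegative diagonal entry $p_i$, and positive semidefiniteness of the $2\times2$ principal submatrix of $P_{\lambda_i}$ at $\{u,v\}$ forces $p_i^2\ge P_{\lambda_i}(u,v)^2$, whence $\Delta_i(u,v)=p_i-P_{\lambda_i}(u,v)\ge0$. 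I would nonetheless keep the explicit computation, since the values of the $\Delta_i$ on the sets $W_m$ are exactly what the remaining estimates for Theorem \ref{Thm:four-eigenvalues} rely on.
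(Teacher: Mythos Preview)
Your proposal is correct and follows essentially the same route as the paper: express each $\Delta_i(u,v)$ via \eqref{Ps}, reduce to the values of $L(u,v)$ and $L^2(u,v)$ on $W_1,W_2,W_3$, and check signs case by case, with the only nontrivial point being $\Delta_2$ on $W_3$ where $d-\lambda\ge 1$ is needed (a tiny quibble: integrality of $d-\lambda$ comes from $d,\lambda\in\mathbb{N}$ as design parameters, while realness and distinctness of the eigenvalues give only $d-\lambda>0$). Your walk-regularity/PSD alternative is a genuine addition not in the paper and is a clean way to get \eqref{P123} without computation, though, as you note, the explicit values on the $W_m$ are needed later anyway.
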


\begin{proof}
It follows from \eqref{Ps} that 
$$
\begin{aligned}
\Delta_1(u, v)
&=C_1
\big\{ d^2+d - L^2(u, v) - (\lambda_2+\lambda_3)
\big(d-L(u, v)\big)+\lambda_2 \lambda_3\big\}, \\
\Delta_2(u, v)
&=C_2
\big\{ d^2+d - L^2(u, v) - (\lambda_1+\lambda_3)
\big(d-L(u, v)\big)+\lambda_1 \lambda_3\big\}, \\
\Delta_3(u, v)
&=C_3
\big\{ d^2+d - L^2(u, v) - (\lambda_1+\lambda_2)
\big(d-L(u, v)\big)+\lambda_1 \lambda_2\big\}.  
\end{aligned}
$$
We here should recall 
$C_1, C_3>0$ and $C_2<0$. 
Suppose that $(u, v) \in W_1$. 
Then, we have 
$$
\begin{aligned}
\Delta_1(u, v) 
&=C_1\big\{ (d^2+d) + 2d
-(3d+\sqrt{d-\lambda})(d+1)
+2d(d+\sqrt{d-\lambda})\big\}\\
&= C_1(d-1)\sqrt{d-\lambda} \ge 0,\\
\Delta_2(u, v) 
&= C_2\big\{(d^2+d) + 2d
-(3d-\sqrt{d-\lambda})(d+1)
+2d(d-\sqrt{d-\lambda})\big\}\\
&= C_2(1-d)\sqrt{d-\lambda} \ge 0,\\
\Delta_3(u, v) 
&= C_3\big\{(d^2+d) + 2d
-2d(d+1)
+(d^2-d+\lambda)\big\}=C_3\lambda \ge 0.
\end{aligned}
$$
Next suppose that 
$(u, v) \in W_2$.
Then, we have 
$$
\begin{aligned}
\Delta_1(u, v) 
&= C_1\big\{(d^2+d) -\lambda
-(3d+\sqrt{d-\lambda})d
+2d(d+\sqrt{d-\lambda})\big\}\\
&=C_1 \sqrt{d-\lambda}(\sqrt{d-\lambda}+d) \ge 0,\\
\Delta_2(u, v) 
&= C_2\big\{(d^2+d) -\lambda
-(3d-\sqrt{d-\lambda})d
+2d(d-\sqrt{d-\lambda})\big\}\\
&=C_2 \sqrt{d-\lambda}(\sqrt{d-\lambda}-d)
\ge 0,\\
\Delta_3(u, v) 
&= C_3\big\{(d^2+d) -\lambda
-2d^2
+(d^2-d+\lambda)\big\}= 0.
\end{aligned}
$$
Finally, suppose that $(u, v) \in W_3$. 
Then, it holds that 
$$
\begin{aligned}
\Delta_1(u, v) 
&= C_1\big\{(d^2+d) 
-(3d+\sqrt{d-\lambda})d
+2d(d+\sqrt{d-\lambda})\big\}\\
&= C_1d(1+\sqrt{d-\lambda}) \ge 0,\\
\Delta_2(u, v) 
&= C_2\big\{(d^2+d) 
-(3d-\sqrt{d-\lambda})d
+2d(d-\sqrt{d-\lambda})\big\}\\
&= C_2d(1-\sqrt{d-\lambda}) \ge 0,\\
\Delta_3(u, v) 
&= C_3\big\{(d^2+d) 
-2d^2
+(d^2-d+\lambda)\big\}=C_3\lambda \ge 0,
\end{aligned}
$$
where we used $d-\lambda \ge 1$ 
for the estimate of $\Delta_2(u, v)$. 
We also note that the case $d=\lambda$ never occurs in our setting. 
In fact, if $d=\lambda$, then 
the number of Laplacian eigenvalues would be three. 
Therefore, Inequalities \eqref{P123} are established
for all $u, v \in V$ with $u \neq v$. 
\end{proof}

\begin{lm}
\label{Lem:P's2}
The following hold:

\vspace{2mm}
\noindent
{\rm (1)} If $(u, v) \in W_1$, we have
$\Delta_{12}(u, v) \ge 0$, $\Delta_{13}(u, v) \ge 0$ and 
$\Delta_{23}(u, v) \ge 0$. 

\vspace{1mm}
\noindent
{\rm (2)} If $(u, v) \in W_2$, we have
$\Delta_{12}(u, v)=0$, $\Delta_{13}(u, v) \le 0$ and 
$\Delta_{23}(u, v) \le 0$.

\vspace{1mm}
\noindent
{\rm (3)} If $(u, v) \in W_3$, we have 
$\Delta_{12}(u, v) \le 0$, $\Delta_{13}(u, v) \ge 0$ and 
$\Delta_{23}(u, v) \ge 0$. 

\end{lm}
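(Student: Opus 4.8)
The plan is to run, for the bilinear quantities $\Delta_{ij}(u,v)$, the same kind of explicit computation that settled Lemma~\ref{Lem:P's}, now using the off-diagonal values of $L$ and $L^2$ on $W_1,W_2,W_3$ together with the diagonal entries of the spectral projections.

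The first step is to record those diagonal entries. Since $X$ is connected, $P_0$ is the orthogonal projection onto the constant functions, so every entry of $P_0$ equals $1/n$; in particular $(I-P_0)(u,v)=-1/n$ whenever $u\neq v$. Substituting $L(u,u)=d$, $L^2(u,u)=d^2+d$ and $P_0(u,u)=1/n$ into \eqref{Ps} — or, alternatively, invoking the walk-regularity of $X$ together with the multiplicities $1,\ n/2-1,\ n/2-1,\ 1$ of $\lambda_0,\lambda_1,\lambda_2,\lambda_3$ furnished by Proposition~\ref{Prop:incidence2} — gives
\[
P_{\lambda_1}(u,u)=P_{\lambda_2}(u,u)=\frac{n-2}{2n},\qquad P_{\lambda_3}(u,u)=\frac1n .
\]
The equality $P_{\lambda_1}(u,u)=P_{\lambda_2}(u,u)$ is the structural fact that makes the lemma tractable: it collapses $\Delta_{12}(u,v)$ to $P_{\lambda_1}(u,u)\bigl(P_{\lambda_1}(u,v)-P_{\lambda_2}(u,v)\bigr)$, so there only the \emph{difference} of two off-diagonal projection entries is relevant.

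Next I would compute the off-diagonal entries $P_{\lambda_i}(u,v)$ on each of $W_1,W_2,W_3$. This is a direct substitution into \eqref{Ps} of the values of $L(u,v)$ and $L^2(u,v)$ defining the given $W_k$ (with $(I-P_0)(u,v)=-1/n$); equivalently one may use $P_{\lambda_i}(u,v)=P_{\lambda_i}(u,u)-\Delta_i(u,v)$ and recycle the $\Delta_i(u,v)$ already evaluated in the proof of Lemma~\ref{Lem:P's}. The outcome is clean: on $W_1$ one gets $P_{\lambda_2}(u,v)=-P_{\lambda_1}(u,v)=-\frac{n-2d}{2n\sqrt{d-\lambda}}$ and $P_{\lambda_3}(u,v)=-1/n$; on $W_2$ one gets $P_{\lambda_1}(u,v)=P_{\lambda_2}(u,v)=-1/n$ and $P_{\lambda_3}(u,v)=1/n$; and on $W_3$ one gets $P_{\lambda_2}(u,v)=-P_{\lambda_1}(u,v)=\frac{d}{n\sqrt{d-\lambda}}$ and $P_{\lambda_3}(u,v)=-1/n$.

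Finally, writing $p:=P_{\lambda_1}(u,u)=P_{\lambda_2}(u,u)=\frac{n-2}{2n}>0$ and $q:=P_{\lambda_3}(u,u)=\frac1n>0$, the three quantities become $\Delta_{12}(u,v)=p\bigl(P_{\lambda_1}(u,v)-P_{\lambda_2}(u,v)\bigr)$, $\Delta_{13}(u,v)=qP_{\lambda_1}(u,v)-pP_{\lambda_3}(u,v)$ and $\Delta_{23}(u,v)=qP_{\lambda_2}(u,v)-pP_{\lambda_3}(u,v)$; plugging in the values above and reading off the signs case by case yields exactly the assertions (1)--(3). Each inequality that arises reduces to two elementary facts: $d-\lambda\ge1$, hence $\sqrt{d-\lambda}\ge1$ (recall $d=\lambda$ is excluded, since it would leave only three Laplacian eigenvalues), and $n>2d$. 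The latter I would obtain once and for all from the design relation $\lambda(n/2-1)=d(d-1)$ together with $\lambda\le d-1$, which force $n/2-1\ge d$, i.e. $n\ge 2d+2$. I do not expect a genuine obstacle here; the only real care needed is the sign-bookkeeping — keeping in mind that $C_2<0$ while $C_1,C_3>0$ — and having $n\ge 2d+2$ in place before simplifying, since (for instance, for $\Delta_{13}$ on $W_3$) it is precisely what closes the estimate $(n-2)\sqrt{d-\lambda}\ge 2d$.
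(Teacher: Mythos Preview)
Your argument is correct and, at the level of algebra, equivalent to the paper's; both proceed by direct case-by-case evaluation on $W_1,W_2,W_3$. The organization, however, differs in a useful way. The paper first derives a single closed formula
\[
\Delta_{ij}(u,v)=C_iC_j(\lambda_j-\lambda_i)\Bigl[L^2(u,v)d-L(u,v)(d^2+d)+\tfrac{\lambda_k}{n}\bigl(L^2(u,v)-(d^2+d)-\lambda_k(L(u,v)-d)\bigr)-\lambda_k\bigl(L^2(u,v)-\lambda_kL(u,v)\bigr)\Bigr]
\]
and then substitutes the values of $L(u,v),L^2(u,v)$ on each $W_k$, simplifying with the relation $d^2-d+\lambda=n\lambda/2$ and tracking the sign of $C_iC_j$. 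Your route instead computes the individual projection entries first, exploiting walk-regularity (equivalently, the multiplicity pattern $1,\,n/2-1,\,n/2-1,\,1$) to get $P_{\lambda_1}(u,u)=P_{\lambda_2}(u,u)=\tfrac{n-2}{2n}$ and $P_{\lambda_3}(u,u)=\tfrac1n$ without any algebra, and then reads off the signs of $\Delta_{ij}$ from the two structural facts $P_{\lambda_1}(u,u)=P_{\lambda_2}(u,u)$ and $P_{\lambda_2}(u,v)=-P_{\lambda_1}(u,v)$ on $W_1\cup W_3$. This makes the sign analysis almost transparent and avoids the heavier symbolic manipulation the paper carries out. You also make explicit the inequality $n\ge 2d+2$ (from $\lambda(n/2-1)=d(d-1)$ and $\lambda\le d-1$), which the paper uses implicitly when it asserts that the factor $1-\tfrac{2d}{n}$ is nonnegative in the $W_1$ computation of $\Delta_{12}$; isolating it as you do is cleaner.
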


\begin{proof}
It follows 
directly from \eqref{Ps} that 
\begin{align}
 \Delta_{ij}(u, v)
 &=C_iC_j(\lambda_j-\lambda_i)\Big[
 \big\{L^2(u,v)d-L(u,v)(d^2+d)\big\}\nn\\
&\hspace{1cm}+\frac{1}{n}\lambda_k
\Big\{\big(L^2(u,v)-(d^2+d)\big)
-\lambda_k\big(L(u,v)-d\big)\Big\}\nn\\
&\hspace{1cm}-\lambda_k\big(L^2(u,v)
-\lambda_kL(u, v)\big)\Big]
\end{align}
for $(i, j, k)=(1, 2, 3), (1, 3, 2), (2, 3, 1)$.
Suppose that $(u, v) \in W_1$. Then 
it follows from $L(u, v)=-1$ and $L^2(u, v)=-2d$ that
$$
\begin{aligned}
\Delta_{12}(u, v)
&=-d(d-1)C_1C_2(\lambda_2-\lambda_1)
\Big(1-\frac{2d}{n}\Big)\ge 0,\\
\Delta_{13}(u, v)
&=C_1C_3(\lambda_3-\lambda_1)
\Big[\lambda+\frac{1}{n}(d+\sqrt{d-\lambda})
\big\{-2d+(1+d)\sqrt{d-\lambda}\big\}\Big]\\
&=C_1C_3(\lambda_3-\lambda_1)
\Big\{\lambda - \frac{2}{n}(d^2-d+\lambda)
+\frac{1}{n}(d-1)\sqrt{d-\lambda}(d+\sqrt{d-\lambda})\Big\}\\
&=\frac{1}{n} C_1C_3(\lambda_3-\lambda_1)
(d-1)\sqrt{d-\lambda}(d+\sqrt{d-\lambda}) \ge 0,\\
\Delta_{23}(u, v)
&=C_2C_3(\lambda_3-\lambda_2)
\Big[\lambda-\frac{1}{n}(d-\sqrt{d-\lambda})
\big\{2d+(d+1)\sqrt{d-\lambda}\big\}\Big]\\
&=C_2C_3(\lambda_3-\lambda_2)
\Big\{\lambda - \frac{2}{n}(d^2-d+\lambda)\\
&\hspace{1cm}-\frac{1}{n}(d-1)\sqrt{d-\lambda}(d-\sqrt{d-\lambda})\Big\}\\
&=-\frac{1}{n} C_2C_3(\lambda_3-\lambda_2)
(d-1)\sqrt{d-\lambda}(d-\sqrt{d-\lambda}) \ge 0,
\end{aligned}
$$
where we used $d^2-d+\lambda=n\lambda/2$
for the calculations of $\Delta_{13}(u, v)$
and $\Delta_{23}(u, v)$. 
Next we suppose $(u, v) \in W_2$. 
Then, it follows from 
$L(u, v)=0, \, L^2(u, v)=\lambda$
and $d^2-d+\lambda=n\lambda/2$ that  
$$
\begin{aligned}
\Delta_{12}(u, v)
&=C_1C_2(\lambda_2-\lambda_1)
\Big\{\lambda d+\frac{2d}{n}(d^2-d+\lambda)
-2\lambda d\Big\}
=0,\\
\Delta_{13}(u, v)
&=C_1C_3(\lambda_3-\lambda_1)
\Big\{-\lambda\sqrt{d-\lambda} 
+\frac{1}{n}(d+\sqrt{d-\lambda})
(\lambda-d+d\sqrt{d-\lambda})\Big\}\\
&=C_1C_3(\lambda_3-\lambda_1)\sqrt{d-\lambda}
\Big\{-\lambda  
+\frac{1}{n}(d^2-d+\lambda)\Big\}\\
&=-\frac{1}{2}\lambda 
C_1C_3(\lambda_3-\lambda_1)\sqrt{d-\lambda}\le 0,\\
\Delta_{23}(u, v)
&=C_2C_3(\lambda_3-\lambda_2)
\Big\{\lambda\sqrt{d-\lambda} 
+\frac{1}{n}(d-\sqrt{d-\lambda})
(\lambda-d-d\sqrt{d-\lambda})\Big\}\\
&=C_2C_3(\lambda_3-\lambda_2)\sqrt{d-\lambda}
\Big\{\lambda  
-\frac{1}{n}(d^2-d+\lambda)\Big\}\\
&=\frac{1}{2}\lambda 
C_2C_3(\lambda_3-\lambda_2)\sqrt{d-\lambda}\le 0.
\end{aligned}
$$
We finally suppose that $(u, v) \in W_3$. 
Then, by noting 
$L(u, v)=L^2(u, v)=0$, we have
$$
\begin{aligned}
\Delta_{12}(u, v)
&=\frac{2}{n}C_1C_2(\lambda_2-\lambda_1)d^2(d-1)\le 0,\\
\Delta_{13}(u, v)
&=\frac{1}{n}C_1C_3d(\lambda_3-\lambda_1)
(d+\sqrt{d-\lambda})(\sqrt{d-\lambda}-1) \ge 0,\\
\Delta_{23}(u, v)
&=-\frac{1}{n}C_2C_3d(\lambda_3-\lambda_2)
(d-\sqrt{d-\lambda})(\sqrt{d-\lambda}+1)
\ge 0.
\end{aligned}
$$
Putting it all together, 
we have established Lemma \ref{Lem:P's2}. 
\end{proof}

We are in a position to give a proof of 
Theorem \ref{Thm:four-eigenvalues}.

\begin{proof}
Let $u, v \in V$ with $u \neq v$. 
It is sufficient to show that 
$h_{u, v}(t) \ge 0$ for all $t \ge 0$. 
We suppose that $(u, v) \in W_1$. 
Then Lemmas \ref{Lem:P's} and \ref{Lem:P's2}
readily implies that 
$h_{u, v}(t) \ge 0$ for $t \ge 0$. 
Next we suppose that $(u, v) \in W_2$. 
Since it holds that $\Delta_{12}(u, v)=0$
by Lemma \ref{Lem:P's2}, we have 
$$
\begin{aligned}
\e^{\lambda_3t}h_{u, v}(t) 
&=\frac{\lambda_3}{n}\Delta_3(u, v)
+\lambda_2\Big\{\frac{1}{n}\Delta_2(u, v)\e^{t\lambda_1}
+\Delta_{13}(u, v)\e^{-t\lambda_1}\Big\}\\
&\hspace{1cm}+\lambda_1\Big\{\frac{1}{n}
\Delta_1(u, v)\e^{t\lambda_2}
+\Delta_{23}(u, v)\e^{-t\lambda_2}\Big\}, 
\qquad t \ge 0.
\end{aligned}
$$
Note that 
$\frac{1}{n}\Delta_i(u, v) \ge 0$ and 
$\Delta_{i3}(u, v) \le 0$ for $i=1, 2$
by using Lemmas \ref{Lem:P's} and \ref{Lem:P's2}. 
Then, Lemma \ref{e-mono} implies that the function 
$\e^{\lambda_3t}h_{u, v}(t)$ is monotonically non-decreasing 
in $[0, \infty)$ and Lemma \ref{derivative} yields 
$\e^{\lambda_3t}h_{u, v}(t) \ge h_{u, v}(0) \ge 0$, $t \ge 0$. 
Hence, we know that $h_{u, v}(t) \ge 0$ for all $t \ge 0$. 

Finally consider the case where $(u, v) \in W_3$. 
Then we also have
$$
\begin{aligned}
&\e^{\lambda_3t}h_{u, v}(t) \\
&=\frac{\lambda_3}{n}\Delta_3(u, v)
+(\lambda_2-\lambda_1)\Delta_{12}(u, v)
+\lambda_2\Big\{\frac{1}{n}\Delta_2(u, v)\e^{t\lambda_1}
+\Delta_{13}(u, v)\e^{-t\lambda_1}\Big\}\\
&\hspace{1cm}+\lambda_1\Big\{\frac{1}{n}\Delta_1(u, v)\e^{t\lambda_2}
+\Delta_{23}(u, v)\e^{-t\lambda_2}\Big\}, 
\qquad t \ge 0.
\end{aligned}
$$
Since $C_2=-\lambda_2^2C_1C_3$, we have 
$$
\begin{aligned}
&\frac{1}{n}\Delta_2(u, v)-
\Delta_{13}(u, v)\nn\\
&=\frac{1}{n}
C_2d(1-\sqrt{d-\lambda})
- \frac{1}{n}C_1C_3d(\lambda_3-\lambda_1)
(d+\sqrt{d-\lambda})(\sqrt{d-\lambda}-1)\nn\\
&=-\frac{1}{n}C_1C_3\lambda_2 d\Big\{
\lambda_2(1-\sqrt{d-\lambda})
+(d+\sqrt{d-\lambda})(\sqrt{d-\lambda}-1)
\Big\}=0.
\end{aligned}
$$
Similarly, one has
$$
\frac{1}{n}\Delta_1(u, v)-
\Delta_{23}(u, v)=0. 
$$
By virtue of Lemmas \ref{e-mono}, \ref{Lem:P's} and \ref{Lem:P's2}, 
we conclude that the function 
$\e^{\lambda_3t}h_{u, v}(t)$ is monotonically non-decreasing 
in $[0, \infty)$. Hence, we can show that 
$h_{u, v}(t) \ge 0$ for all $t \ge 0$
in the same way as the case where 
$(u, v) \in W_2$. 
\end{proof}

%\begin{re}
%\label{Remark}
%It is remarkable that the assumption of bipartiteness
%is used only in the proof of Lemmas {\rm\ref{Lem:P's}} and 
%{\rm\ref{Lem:P's2}}. 
%Therefore, we know that \eqref{P123} is a sufficient
%condition that any finite graphs 
%with four Laplacian eigenvalues satisfy 
%the MNHD. 
%\end{re}

%% Section 4
%%%%%%%%%%%%%%%%%%%%%%%%%%%%%%%%%%%%%%%%%%%%%%%%%%%%%%%%%%%%%%%%%%%%%%%%%
\section{{\bf Examples and further directions}}
%%%%%%%%%%%%%%%%%%%%%%%%%%%%%%%%%%%%%%%%%%%%%%%%%%%%%%%%%%%%%%%%%%%%%%%%%

Our main result asserts that 
a finite graph $X$ with four Laplacian eigenvalues  
satisfies the MNHD 
if $X$ is regular and bipartite.  
An example of such graphs is given below.

\begin{ex}[regular, bipartite case]
\normalfont
Let $M=\{1, 2, 3, 4, 5, 6, 7\}$ and 
$$
\begin{aligned}
\mathcal{B}&=
\big\{ \{1, 2, 3, 4\}, \, \{1, 2, 5, 6\}, \,
\{1, 4, 6, 7\},  \\
&\hspace{1cm}\{1, 3, 5, 7\}, \, \{2, 3, 6, 7\}, \,
\{2, 4, 5, 7\}, \, \{3, 4, 5, 6\}\big\}. 
\end{aligned}
$$
Then we easily verify that 
$(M, \mathcal{B})$ is a BIBD
with parameter $(7, 4, 2)$. 
Hence, the incidence graph of symmetric
$(7, 4, 2)$-design 
is a 4-regular bipartite graph with bipartition 
$M \sqcup \mathcal{B}$ 
and four Laplacian eigenvalues 
$0, 4 \pm \sqrt{2}, 8$
(see Fig. \ref{bip} below).

%%%%%%%%%%%%%%%%%%%%%%
\begin{figure}[ht]
\begin{center}
\includegraphics[width=10cm]{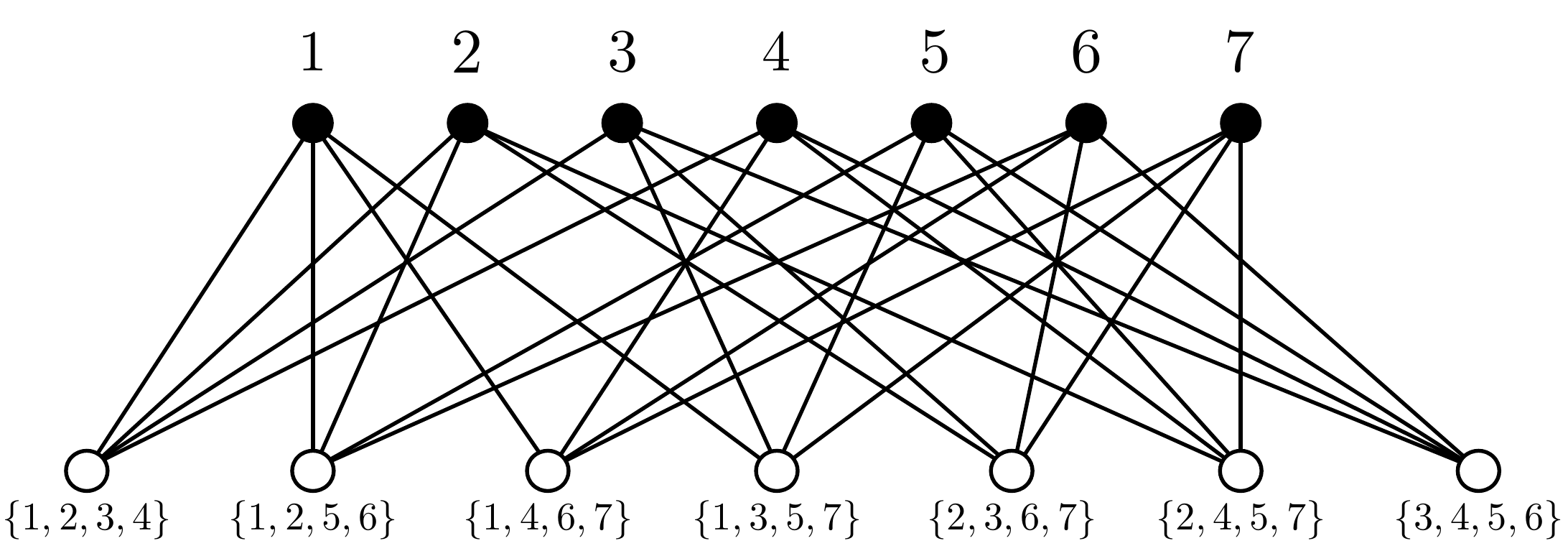}
\end{center}
%\vspace{-0.5cm}
\caption{The incidence graph of 
$(7, 4, 2)$-design.}
\label{bip}
\end{figure}
%%%%%%%%%%%%%%%%%%%%%%

In view of Theorem \ref{Thm:four-eigenvalues}, 
this graph satisfies 
the MNHD. 
Note that the graph above 
corresponds to the case (ii) in 
Proposition \ref{Prop:vanDam}.
We also exhibit below all regular bipartite 
graphs $X=(V, E)$ 
with $|V| \le 30$ 
having four distinct Laplacian eigenvalues, 
according to van Dam--Spence \cite[Appendix A]{DS}. 
\begin{center}
\vspace{3mm}
\begin{tabular}{ccc} \hline
$|V|$ &  
$\sigma(L)$ 
& $(v, k, \lambda)$ \\[5pt]
10  & $\{0, 3, 5, 8\}$ &  $(5, 4, 3)$ \\[5pt]
12  & $\{0, 4, 6, 10\}$ &  $(6, 5, 4)$ \\[5pt]
14  & $\{0, 5, 7, 12\}$ &  $(7, 6, 5)$ \\[5pt]
14  & $\{0, 3 \pm \sqrt{2}, 6\}$ 
&  $(7, 3, 1)$ \\[5pt]
14  & $\{0, 4\pm \sqrt{2}, 8\}$ 
&  $(7, 4, 2)$ \\[5pt]
16  & $\{0, 6, 8, 14\}$ &  $(8, 7, 6)$ \\[5pt]
18  & $\{0, 7, 9, 16\}$ &  $(9, 8, 7)$ \\[5pt]
20  & $\{0, 8, 10, 18\}$ &  $(10, 9, 8)$ \\[5pt]
22  & $\{0, 9, 11, 20\}$ &  $(11, 10, 9)$ \\[5pt]
22  & $\{0, 5 \pm \sqrt{3}, 10\}$ 
&  $(11, 5, 2)$ \\[5pt]\hline
\end{tabular}\hspace{0.2cm}
\begin{tabular}{ccc} \hline
$|V|$ &  
$\sigma(L)$ 
& $(v, k, \lambda)$ \\[5pt]
22  & $\{0, 6 \pm \sqrt{3}, 12\}$ 
&  $(11, 6, 3)$ \\[5pt]
24  & $\{0, 10, 12, 22\}$ &  $(12, 11, 10)$ \\[5pt]
26  & $\{0, 11, 13, 24\}$ &  $(13, 12, 11)$ \\[5pt]
26  & $\{0, 4 \pm \sqrt{3}, 8\}$ 
&  $(13, 4, 1)$ \\[5pt]
26  & $\{0, 9 \pm \sqrt{3}, 18\}$ 
&  $(13, 9, 6)$ \\[5pt]
28  & $\{0, 12, 14, 26\}$ &  $(14, 13, 12)$ \\[5pt]
30  & $\{0, 5, 9, 14\}$ &  $(15, 7, 3)$ \\[5pt]
30  & $\{0, 6, 10, 16\}$ &  $(15, 8, 4)$ \\[5pt]
30  & $\{0, 13, 15, 28\}$ &  $(15, 14, 13)$ \\[5pt]
  &   &    \\[5pt]\hline
\end{tabular}
\end{center}

\end{ex}

On the other hand, we also find some examples 
of finite (not always regular)
graphs with four Laplacian eigenvalues 
which are not bipartite but satisfy 
the MNHD. 

\begin{ex}[regular, non-bipartite case]
\label{Ex:1}
\normalfont
Let $X$ be a finite 3-regular graph
given by {\bf (a)} in Fig.~\ref{fig}. 
The graph Laplacian of $X$ has 
four integral eigenvalues 
$0, 2, 3, 5$. 
This means that the graph $X$
corresponds to the case {\rm (i)} in 
Proposition \ref{Prop:vanDam}.
We can show that the graph $X$ 
actually satisfies the MNHD. 
\begin{proof}
The graph Laplacian $L$ of $X$ and 
its square $L^2$ 
are given by 
$$
L=\begin{pmatrix}
3 & -1 & -1 & 0 & 0 & -1 \\
-1 & 3 & 0 & -1 & -1 & 0 \\
-1 & 0 & 3 & -1 & 0 & -1 \\
0 & -1 & -1 & 3 & -1 & 0 \\
0 & -1 & 0 & -1 & 3 & -1 \\
-1 & 0 & -1 & 0 & -1 & 3 \\
\end{pmatrix}, \qquad 
L^2=\begin{pmatrix}
12 & -6 & -5 & 2 & 2 & -5 \\
-6 & 12 & 2 & -5 & -5 & 2 \\
-5 & 2 & 12 & -6 & 2 & -5 \\
2 & -5 & -6 & 12 & -5 & 2 \\
2 & -5 & 2 & -5 & 12 & -6 \\
-5 & 2 & -5 & 2 & -6 & 12 \\
\end{pmatrix},
$$
respectively. 
We decompose the set 
$(V \times V) \setminus 
\{(v_i, v_i) \, | \, i=1, 2, \dots, 6\}$
into $W_1 \sqcup W_2 \sqcup W_3$, where 
$$
\begin{aligned}
W_1&:=\{(v_i, v_j) \, | \, 
\text{$v_i$, $v_j$ are not adjacent}\},\\
W_2&:=\{(v_i, v_j) \, | \, 
\text{$v_i$, $v_j$ are adjacent
and $v_i, v_j, v_k$ form a triangle for some $k$}\},\\
W_3&:=\{(v_i, v_j) \, | \, 
(v_i, v_j) \notin W_1 \sqcup W_2\}. 
\end{aligned}
$$

Then the values of $\Delta_k=\Delta_k(v_i, v_j), \, k=1, 2, 3$, 
and $\Delta_{k\ell}=\Delta_{k\ell}(v_i, v_j)$, 
$(k, \ell)=(1, 2), (1, 3), (2, 3)$, are exhibited as follows:

\vspace{3mm}
\begin{center}
\begin{tabular}{ccccccc}\hline
 & $\Delta_1$    & $\Delta_2$
 & $\Delta_3$    & $\Delta_{12}$
 & $\Delta_{13}$ & $\Delta_{23}$ \\[5pt]
$(v_i, v_j) \in W_1$ &  $\frac{1}{3}$&  $\frac{1}{2}$&  $\frac{1}{6}$&  $-\frac{1}{36}$&  $-\frac{1}{12}$&  $-\frac{1}{9}$  \\[5pt]
$(v_i, v_j) \in W_2$ &  $0$&  $\frac{1}{2}$&  $\frac{1}{2}$&  $\frac{1}{12}$&  $\frac{1}{12}$&  $0$\\[5pt]
$(v_i, v_j) \in W_3$ &  $\frac{1}{3}$&  $0$&  $\frac{2}{3}$&  $-\frac{1}{9}$&  $0$&  $\frac{2}{9}$\\[5pt] \hline
\end{tabular}
\end{center}

\vspace{3mm}
As is seen, 
$\Delta_k(v_i, v_j) \ge 0, \, k=1, 2, 3$,
holds in each case and  
we can follow the same argument as 
the proof of Theorem \ref{Thm:four-eigenvalues} to
conclude that each function 
$h_{v_i, v_j}(t)$ is monotonically non-decreasing 
in $[0, \infty)$.  
Therefore, we have shown that 
the graph $X$ satisfies the MNHD. 
\end{proof}

\end{ex}

\begin{re}
\label{Cayley}
We should emphasize that the graph $X$ can be 
regarded as a Cayley graph of the symmetric group
$$
\mathfrak{S}_3=\{v_1=e, \, v_2=\sigma, \, v_3=\tau, \,
v_4=\sigma \tau, \, v_5=\tau \sigma, \, v_6=\sigma \tau \sigma\}
$$
over the finite set $\{1, 2, 3\}$ 
with a generating set $\{\sigma, \tau, \tau^2\}$, 
where $e$ denotes the identity permutation and 
$\sigma=(1, 2)$, $\tau=(1, 2, 3)$. 
\end{re}

\begin{ex}[non-regular case]
\normalfont
\label{Ex:2}
Let $X$ be a 6-wheel graph, that is, a finite graph 
given by connecting a certain vertex to 
all vertices of a 5-cycle 
(see {\bf (b)} in Fig. \ref{fig}).
The graph Laplacian of $X$ has four eigenvalues
$$
\lambda_0=0, \qquad 
\lambda_1=\frac{7-\sqrt{5}}{2}, \qquad 
\lambda_2=\frac{7+\sqrt{5}}{2}, \qquad 
\lambda_3=6.
$$
Although it is not regular, 
the graph $X$ also satisfies the MNHD. 

\begin{proof}
The graph Laplacian $L$ together with 
its square $L^2$ have on-diagonal components
given by 
$L(v_i, v_i)=3, \, L^2(v_i, v_i)=12$ if 
$i=1, 2, 3, 4, 5$ and 
$L(v_i, v_i)=5, \, L^2(v_i, v_i)=30$ if 
$i=6$, 
and have off-diagonal ones given by 
\begin{align}
L(v_i, v_j)&=0, & L^2(v_i, v_j)&=2, & 
&\text{if $v_i$ and $v_j$ are not adjacent}, 
\nn\\
L(v_i, v_j)&=-1, & L^2(v_i, v_i)&=-5, & 
&\text{if $v_i$ and $v_j$ are adjacent with $i, j \neq 6$}, 
\nn\\
L(v_i, v_j)&=-1, & L^2(v_i, v_j)&=-6, & 
&\text{otherwise}.
\nn
\end{align}
Then there are following four possibilities for 
the pair $(i, j)$:
\begin{enumerate}[(i)]
\item
$i, j=1, 2, 3, 4, 5$ and $v_i, v_j$ are not adjacent,
\item
$i, j=1, 2, 3, 4, 5$ and $v_i, v_j$
are adjacent,
\item
$i=1, 2, 3, 4, 5$ and $j=6$,
\item
$i=6$ and $j=1, 2, 3, 4, 5$.
\end{enumerate}
The values of $\Delta_k=\Delta_k(v_i, v_j), \, k=1, 2, 3$, 
and $\Delta_{k\ell}=\Delta_{k\ell}(v_i, v_j)$, 
$(k, \ell)=(1, 2), (1, 3), (2, 3)$, are listed as follows:

\vspace{3mm}
\begin{center}
\begin{tabular}{ccccccc}\hline
 & $\Delta_1$ & $\Delta_2$
 & $\Delta_3$ & $\Delta_{12}$
 & $\Delta_{13}$ & $\Delta_{23}$ \\[5pt]
 (i) & $\frac{\sqrt{5}+5}{10}$ & $\frac{5-\sqrt{5}}{10}$
 & 0 & $-\frac{2\sqrt{5}}{25}$ & $-\frac{\sqrt{5}+5}{300}$ & $-\frac{5-\sqrt{5}}{300}$ \\[5pt]
 (ii) & $\frac{5-\sqrt{5}}{10}$ & $\frac{\sqrt{5}+5}{10}$
 & 0 & $\frac{2\sqrt{5}}{25}$ & $-\frac{5-\sqrt{5}}{300}$
 & $-\frac{\sqrt{5}+5}{10}$ \\[5pt]
 (iii) & $\frac{2}{5}$ & $\frac{2}{5}$ & $\frac{1}{5}$
 & 0 & $\frac{1}{15}$ & $\frac{1}{15}$ \\[5pt]
 (iv) & 0 & 0 & 1 & 0 & 0 & 0 \\ \hline
\end{tabular}
\end{center}

\vspace{3mm}
We observe that $\Delta_k(v_i, v_j) \ge 0, \, k=1, 2, 3$,
holds in each case. 
Therefore, by following the same argument as 
the proof of Theorem \ref{Thm:four-eigenvalues}, 
we conclude that the function 
$h_{v_i, v_j}(t)$ is monotonically non-decreasing 
in $[0, \infty)$, which implies that 
the graph $X$
does satisfies the MNHD. 
\end{proof}
\end{ex}

%%%%%%%%%%%%%%%%%%%%%%
\begin{figure}[h]
\begin{center}
\includegraphics[width=10cm]{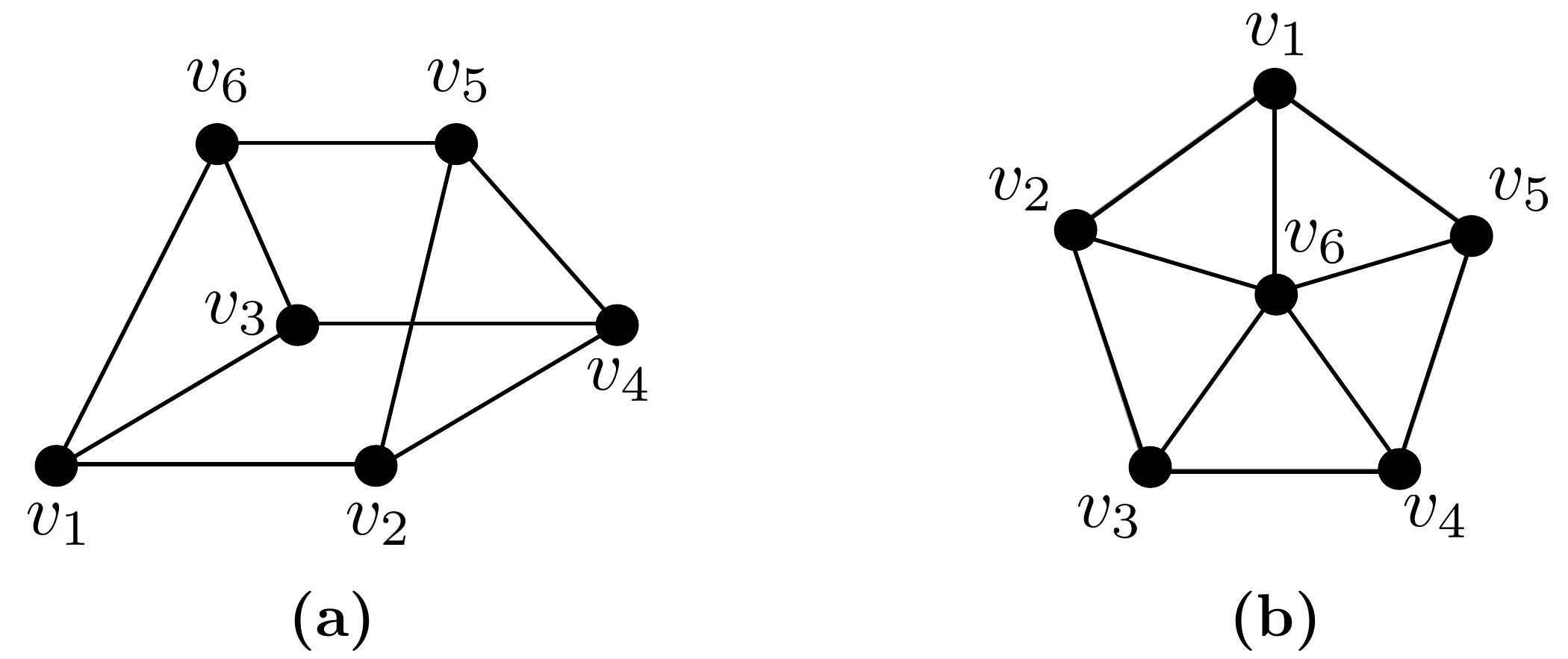}
\end{center}
%\vspace{-0.5cm}
\caption{Finite graphs with four Laplacian eigenvalues.}
\label{fig}
\end{figure}
%%%%%%%%%%%%%%%%%%%%%%

In the present paper, 
we have proved that every 
finite, regular and bipartite graph
with four distinct Laplacian eigenvalues 
satisfies the MNHD. 
However, in view of Examples \ref{Ex:1}
and \ref{Ex:2}, 
our results may be extended to more
general cases such as at least non-bipartite cases.
Indeed, we expect that 
the MNHD truly holds
for all finite regular graphs with four Laplacian 
eigenvalues, which is left as an interesting
open problem. 
Simultaneously, it is also interesting to 
investigate the cases 
of non-regular graphs. 

Recall that every finite abelian Cayley graph
satisfies the MNHD, which was shown in Price \cite{Price}.
It is known that the symmetric group $\mathfrak{S}_3$
is a typical example of finite groups which
are not abelian but solvable. 
As we mentioned in Example \ref{Ex:1}
and Remark \ref{Cayley}, 
the Cayley graph of 
$\mathfrak{S}_3$ enjoys the MNHD. 
On the other hand, the finite lamplighter group, 
which is also known as an example of finite 
solvable groups, does not satisfy the MNHD
(see Regev--Shinkar \cite{RS}). 
By taking these circumstances into account, 
one may ask what kinds of finite 
non-commutative groups satisfy the MNHD. 
In particular, we left the following as an open problem. Does every finite {\it nilpotent} Cayley 
graph satisfy the MNHD?

\vspace{2mm}
\noindent
{\bf Acknowledgements}:
The second-named author would like to thank
Professor Ryokichi Tanaka for informing him
of the results in Regev--Shinkar \cite{RS} and Price \cite{Price}
when he visited Tohoku University in June 2017, 
and for giving him valuable comments.
This work is supported by JSPS KAKENHI
Grant Number 19K23410.

% Authors must disclose all relationships or interests that 
% could have direct or potential influence or impart bias on 
% the work: 
%
%\section*{Conflict of interest}
%
%The authors declare that they have no conflict of interest.

% BibTeX users please use one of
%\bibliographystyle{spbasic}      % basic style, author-year citations
%\bibliographystyle{spmpsci}      % mathematics and physical sciences
%\bibliographystyle{spphys}       % APS-like style for physics
%\bibliography{}   % name your BibTeX data base

%%%%%%%%%%%%%%%%%%%%%%%%%%%%%%%%%%%%%%%%%%%%%%%%%%%%%%%%%%%%%%%%%%%%%
%%%%%%%%%%%%%%%%%%%%%%%%%%%%%%%%%%%%%%%%%%%%%%%%%%%%%%%%%%%%%%%%%%%%%

%%%%%%%%%%%%%%%%%%%%%%%%%%%%%%%%%%%%%%%%%%%%%%%%%%%%%%%%%%%%%%%%%%%%%
%%%%%%%%%%%%%%%%%%%%%%%%%%%%%%%%%%%%%%%%%%%%%%%%%%%%%%%%%%%%%%%%%%%%%

\end{document}